
\documentclass[12pt,a4paper]{amsart}
\usepackage[T1]{fontenc}
\usepackage[hmargin=1.25in,vmargin=1.25in]{geometry}    

\usepackage{amssymb}
\usepackage{mathtools}
\usepackage{verbatim,enumerate}
\usepackage{xcolor}

\usepackage[looser]{newpxtext}
\usepackage[smallerops]{newpxmath}
\usepackage{bm}

\linespread{1.05}

\usepackage[hidelinks]{hyperref}


\theoremstyle{definition}
\newtheorem{defn}{Definition}[section]

\newtheorem{rem}[defn]{Remark}

\theoremstyle{plain}
\newtheorem{thm}[defn]{Theorem}
\newtheorem{lem}[defn]{Lemma}
\newtheorem{prop}[defn]{Proposition}
\newtheorem{coro}[defn]{Corollary}

\newcommand{\eps}{\varepsilon} 
 
\newcommand{\bbr}{\mathbb{R}} 
\newcommand{\bbn}{\mathbb{N}}

\newcommand{\vlog}{V^{\log{}}}
\newcommand{\vtlog}{\widetilde{V}^{\log{}}}
\newcommand{\dd}{\mathop{}\!\mathrm{d}}
\DeclareMathOperator{\orb}{Orb}

\title[A note on logarithmic mean equicontinuity] 
{A note on logarithmic mean equicontinuity}

\author[D. Kwietniak]{Dominik Kwietniak}
\address[D. Kwietniak]{Faculty of Mathematics and Computer Science,
Jagiellonian University in Krak\'ow,
ul. {\L}ojasiewicza 6, 30-348 Kraków, Poland}
\email{dominik.kwietniak@uj.edu.pl}
\urladdr{https://orcid.org/0000-0002-7794-2835}

\author[J. Li]{Jian Li}
\address[J. Li]{Institute for  Mathematical Sciences and Artificial Intelligence \& Department of Mathematics,
	Shantou University, Shantou, 515821, Guangdong, China}
\email{lijian09@mail.ustc.edu.cn}
\urladdr{https://orcid.org/0000-0002-8724-3050}

\author[H. Pourmand]{Habibeh Pourmand}
\address[H. Pourmand]{Faculty of Mathematics and Computer Science,
Jagiellonian University in Krak\'ow,
ul. {\L}ojasiewicza 6, 30-348 Kraków, Poland}
\email{habibeh.pourmand@gmail.com}

\subjclass[2020]{37B05, 37B25, 37A05}

\keywords{Invariant measure, unique ergodicity, mean equicontinuity, logarithmic mean equicontinuity}

\date{\today}

\begin{document}

\begin{abstract}
We study the set of harmonic limits of empirical measures in topological dynamical systems. We obtain a characterization
of unique ergodicity based on logarithmic (harmonic) mean convergence in place of Ces\'{a}ro convergence. 
We introduce logarithmic mean equicontinuity and show that a topological dynamical system is logarithmically mean equicontinuous if and only if it is mean equicontinuous.
\end{abstract}

\maketitle 

\section{Introduction}

By a topological dynamical system, we mean a pair $(X,T)$
where $(X,d)$ is a compact metric space and $T\colon X\to X$ is a continuous map.
We say that a topological dynamical system $(X,T)$ is \emph{equicontinuous} if for every $\eps>0$ there exists $\delta>0$ such that for any $x,y\in X$ with $d(x,y)<\delta$, one has $d(T^nx,T^ny)<\eps$ for all $n\geq 0$.
Equicontinuity ensures stable, predictable behaviour as nearby points remain close under all iterations. 
Equicontinuous systems play a fundamental role in topological dynamics, and every surjective equicontinuous system is conjugate to an isometric system given by a rotation on a compact topological group. 
When studying dynamical systems with discrete spectrum, Fomin \cite{F51} introduced a weak form of equicontinuity, called \emph{mean-L-stability}. A topological dynamical system $(X,T)$ is mean-L-stable if for every $\eps>0$ there exists $\delta>0$ such that for any $x,y\in X$ with $d(x,y)<\delta$ and for any $n\geq 0$ outside a set of upper density less than $\eps$, one has $d(T^nx,T^ny)<\eps$. 
It had been an open problem whether any ergodic invariant measure on a mean-L-stable system has discrete spectrum, see \cite{S82}. 
In \cite{LTY15}, Li et al.\  introduced the concept of mean equicontinuity. A topological dynamical system $(X,T)$ is \emph{mean equicontinuous} if for every $\eps>0$ there exists  $\delta>0$ such that for any $x,y\in X$ with $d(x,y)<\delta$, one has  
\[
\limsup_{n\to\infty}\frac{1}{n}\sum_{k=1}^n d(T^{k-1}x,T^{k-1}y)<\eps.
\]
The authors of \cite{LTY15} also showed that mean equicontinuity is equivalent to mean-L-stability and confirmed that every ergodic invariant measure of a mean-L-stable system has discrete spectrum. 
In \cite{G17}, García-Ramos, 
introduced the concept of mean equicontinuity for an invariant measure of a topological dynamical system. He showed that an ergodic invariant measure is 
mean equicontinuous if and only if it has discrete spectrum.
Recently in \cite{HLTXY21}, Huang et al.\  showed that any (not necessarily ergodic) invariant measure is mean equicontinuous if and only if it has discrete spectrum. A direct consequence of this result is that any invariant measure on a mean-L-stable system has discrete spectrum.

In 2010, Sarnak \cite{S12} formulated the following conjecture:
for every topological dynamical system $(X,T)$ with zero topological entropy, every $x\in X$, and every continuous function $f\colon X\to \bbr$,
\[
    \lim_{n\to\infty}\frac{1}{n}\sum_{k=1}^n f(T^kx) {\bm \mu} (n)  =0,
\]
where ${\bm \mu} (n) $ is the classical Möbius function.
We refer the reader to the survey \cite{FKL18} and references therein for the recent developments concerning this conjecture.
In \cite{T17},  Tao studied the following logarithmic version of Sarnak's conjecture:
for every topological dynamical system $(X,T)$ with zero topological entropy, every $x\in X$, and every continuous function $f\colon X\to \bbr$, one has
\[
    \lim_{n\to\infty} \frac{1}{\log n}\sum_{k=1}^n \frac{1}{k} f(T^kx) {\bm \mu} (n)  =0.
\]
Tao proved 
that the logarithmic versions of Sarnak’s and Chowla's conjectures are equivalent. 
In \cite{GKL18} Gomilko et al.\  showed that Sarnak’s conjecture implies the Chowla's conjecture along a subsequence.
Frantzikinakis and Host \cite{FH18} verified the logarithmic version of Sarnak's conjecture for a large class of topological dynamical systems, in particular for all uniquely ergodic systems with zero entropy. 

In view of these results, it is natural to consider the logarithmic version of mean equicontinuity. We say that 
a topological dynamical system is 
\emph{logarithmically mean equicontinuous} if for any $\eps>0$ there exists  $\delta>0$ such that for any $x,y\in X$ with $d(x,y)<\delta$, one has 
\[
    \limsup_{n\to\infty} \frac{1}{H_n}\sum_{k=1}^{n}\frac{1}{k}d(T^{k-1}x,T^{k-1}y)<\eps,
\]
where $H_n=\sum_{k=1}^n \frac{1}{k}$. 
It is easy to see that mean equicontinuity implies logarithmic mean equicontinuity.
The main result of this note is to show that the converse is also true, and hence logarithmic mean equicontinuity is in fact equivalent to mean equicontinuity.

\begin{thm}\label{thm:main-result}
A topological dynamical system $(X,T)$ is logarithmically mean equicontinuous if and only if it is mean equicontinuous.
\end{thm}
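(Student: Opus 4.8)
The plan is to recast both properties as uniform continuity statements for two pseudometrics on $X$. For a pair $(u,v)$ write $a_k=d(T^{k-1}u,T^{k-1}v)$, $\sigma_n=\frac1n\sum_{k=1}^n a_k$, and set $\overline d(u,v)=\limsup_n \sigma_n$ together with $\overline d_{\log}(u,v)=\limsup_n \frac{1}{H_n}\sum_{k=1}^n \frac{a_k}{k}$. Mean equicontinuity is exactly the assertion that for every $\eps>0$ there is $\delta>0$ with $\overline d(u,v)<\eps$ whenever $d(u,v)<\delta$, and logarithmic mean equicontinuity is the same statement with $\overline d_{\log}$. The first, easy, step is Abel summation: since $a_k=S_k-S_{k-1}$ with $S_k=\sum_{j\le k}a_j$, one gets $\sum_{k=1}^n \frac{a_k}{k}=\sigma_n+\sum_{k=1}^{n-1}\frac{\sigma_k}{k+1}$, so after dividing by $H_n$ the logarithmic mean is asymptotically the logarithmic average of the Cesàro means $\sigma_k$. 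In particular $\overline d_{\log}\le \overline d$ pointwise, which re-proves that mean equicontinuity implies logarithmic mean equicontinuity, and, crucially, if $\sigma_n$ actually converges then the logarithmic mean converges to the same limit. I also record that $\overline d$ is $T$-invariant, i.e. $\overline d(Tu,Tv)=\overline d(u,v)$.

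For the converse I argue by contradiction. Assuming logarithmic mean equicontinuity but not mean equicontinuity, there is $\eps_0>0$ such that for every $\delta>0$ one finds a pair $z_0=(x,y)$ with $d(x,y)<\delta$ but $\overline d(z_0)\ge\eps_0$. I work in the product system $(X\times X,S)$, $S=T\times T$, with the continuous function $f(u,v)=d(u,v)$; then $\overline d(z_0)=\limsup_n\frac1n\sum_{k=0}^{n-1}f(S^kz_0)\ge\eps_0$. Taking a weak$^*$ limit of empirical measures along a subsequence realising this $\limsup$ produces an $S$-invariant measure $\nu$ with $\int f\,d\nu\ge\eps_0$, and passing to an ergodic component I obtain an ergodic $\mu$ with $\int f\,d\mu=c\ge\eps_0$. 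For a $\mu$-generic point $w$ the Cesàro averages of $f$ converge to $c$, hence by the first step so do the logarithmic averages; thus $w$ is a pair whose forward logarithmic mean equals $c\ge\eps_0$. If such a $w$ could be chosen near the diagonal, i.e. with $d(\pi_1w,\pi_2w)<\delta$, it would contradict logarithmic mean equicontinuity and finish the proof. So the whole matter reduces to producing a pair arbitrarily close to the diagonal whose forward logarithmic mean is bounded below by a fixed positive constant.

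This reduction is also where the main obstacle lies, and it is exactly the point where Cesàro and logarithmic averaging genuinely differ. The measure $\mu$ may be supported entirely away from the diagonal, so that no generic point for $\mu$ is close: this happens already for the model sequence $a_k=\mathrm{diam}(X)$ on blocks $[M_j,2M_j]$ with $M_{j+1}\gg M_j$ and $a_k$ small elsewhere, where the relevant ergodic measure sits at maximal distance while $\overline d_{\log}(z_0)=0$. The way I intend to break this is to exploit the structure forced by being in the hard case $\overline d(z_0)\ge\eps_0$ together with $\overline d_{\log}(z_0)$ small, the case $\overline d_{\log}(z_0)\ge\eps_0$ being an immediate contradiction. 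Using the Tauberian slow-oscillation bound $|\sigma_{n+1}-\sigma_n|\le \mathrm{diam}(X)/(n+1)$, smallness of the logarithmic average of $(\sigma_n)$ together with $\limsup_n\sigma_n\ge\eps_0$ forces the times where $\sigma_n$ is large to occur in super-exponentially spaced blocks. I then select near-diagonal (``good'') times $m_i$ situated just before longer and longer such blocks and pass to a limit of the shifted pairs $S^{m_i}z_0$; the limit pair lies near the diagonal, while its forward orbit begins a high-distance stretch that is logarithmically thick as seen from its starting time, forcing its forward logarithmic mean to stay bounded below. This yields the required near-diagonal pair and the contradiction.

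The delicate part of the last step, and what I expect to require the most care, is converting ``the Cesàro mean is large on a block'' (a statement about the average of $a_k$) into ``the instantaneous distances are large on a set that is logarithmically thick right after a near-diagonal time'' (what the forward logarithmic mean actually sees, since its weights concentrate on the terms immediately following the base point). Controlling simultaneously the location of the good times, the thickness of the ensuing high-distance stretch, and the passage to the limit in $X\times X$ is the technical heart of the argument; everything else is soft.
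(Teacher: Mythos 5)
Your Abel--summation step and the easy direction are correct (they match the paper's Lemma~\ref{lem:log-mean-seq} and Remark~\ref{rem:log-mean-eq}), and your reduction of the converse to ``produce pairs arbitrarily close to the diagonal whose logarithmic mean is bounded below by a fixed constant'' is also correct. The gap is that the device you propose for this last step cannot be made to work: the sequence-level statement it needs is false. Let me pin down what you need. To contradict logarithmic mean equicontinuity you must exhibit \emph{one} constant $\eps_1>0$ (it may depend on $\eps_0$ and $\mathrm{diam}(X)$, but not on $\delta$) such that for every $\delta>0$ there is a pair at distance $<\delta$ from the diagonal with logarithmic mean at least $\eps_1$; a bound of the form $\eps_1(\delta)\to 0$ gives no contradiction. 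Moreover, if the candidate pair is a limit $w^*=(u^*,v^*)=\lim_i S^{m_i}z_0$, then for each fixed $k$ one has $d(T^{k-1}u^*,T^{k-1}v^*)=\lim_i a_{m_i+k}$, so $\overline d_{\log}(w^*)$ only registers values of $(a_k)$ at offsets from $m_i$ that stay \emph{bounded} as $i\to\infty$: a high-distance stretch whose distance from $m_i$ grows is invisible in the limit, no matter how logarithmically thick it is as seen from $m_i$. If instead you use the shifted pairs $S^{m_i}z_0$ themselves, the stretch around $M_j$ must begin within distance $M_j^{1-c}$ of $m_i$ for some fixed $c>0$. Either way, you need near-diagonal times essentially adjacent to high-distance stretches.

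Such times need not exist under your hypotheses. Let $D=\mathrm{diam}(X)$, let $M_1$ be huge, $M_{j+1}\ge e^{M_j}$, $B_j=\lceil M_j/\log M_j\rceil$, and consider the sequence with $a_k=D$ on blocks $[M_j,2M_j]$, $a_k=2\delta$ on buffers $[M_j-B_j,M_j)$, and $a_k=\eps/2$ elsewhere. Every property your argument uses holds: $a_1=\eps/2$ is small, $\limsup_n\sigma_n\ge D/2$, the slow-oscillation bound is automatic, and the logarithmic limsup equals $\eps/2<\eps$ because blocks and buffers have zero upper logarithmic density. Yet any $m$ with $a_{m+1}<\delta$ lies outside all buffers and blocks, so every later block is at offset at least $B_j\ge M_j/\log M_j$ from $m$, and at every scale $n$ its contribution to $\frac{1}{H_n}\sum_{k\le n}a_{m+k}/k$ is at most of order $D\log\log M_j/\log M_j$. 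Consequently every near-diagonal time has shifted logarithmic mean at most $\eps/2+2\delta$ plus a quantity made arbitrarily small by choosing $M_1$ large, and every limit of shifted near-diagonal pairs sees only the values $\eps/2$ and $2\delta$. So no $\delta$-independent $\eps_1$ can be extracted: the buffers --- too large to count as near-diagonal, yet too small and too thin to show up in any logarithmic mean --- shield every block from every near-diagonal time. This is a ``shielded'' variant of the obstruction you yourself flagged, and it survives your proposed fix. Since your outline uses nothing about $(a_k)$ beyond the listed properties, it cannot be completed as described; closing it would require genuinely dynamical input (for instance, the limit pattern above is itself realized by an actual pair of the system, at distance $\eps/2$ from the diagonal, to which the equicontinuity hypothesis applies again --- a recursion your sketch does not contain).

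For comparison, the paper never tries to move quasi-generic points toward the diagonal. It shows that logarithmic mean equicontinuity makes $x\mapsto\vlog_T(x)$ continuous (Lemma~\ref{lem:log-mean-equi-vlog}), hence constant on orbit closures; since every ergodic measure has logarithmically generic points (Proposition~\ref{prop:ergodic-vlog}), every orbit closure --- in particular every $\overline{\orb((x,y),T\times T)}$ in the product --- is uniquely ergodic (Lemma~\ref{lem:log-mean-eq-orbit-unqiuley-ergodic}). The Ces\`aro and logarithmic Oxtoby-type theorems (Theorems~\ref{thm:unique-erg} and~\ref{thm:unique-erg-log-mean}) then give, for \emph{every} pair $(x,y)$, that both averages of $d(T^{k-1}x,T^{k-1}y)$ converge to the same number $\int d\,\dd\mu_{(x,y)}$, so the two limsups coincide identically and the equivalence is immediate. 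Your invariant-measure setup on the product is a sensible first move, but it lands precisely on the unresolved crux; unique ergodicity of orbit closures is the idea that dissolves it.
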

The proof uses properties of the harmonic limits of empirical measures studied in \cite{FH18}, \cite{GKL18},  and \cite{P24}.  In order to get our proof we need to
study limits of empirical measures with logarithmic (harmonic) mean convergence replacing Ces\`aro convergence. That is, we study convergence of subsequences of logarithmically averaged empirical measures and compare them with arithmetically averaged empirical measures.  
In particular, we obtain a characterization of unique ergodicity by logarithmic mean convergence, see Theorem~\ref{thm:unique-erg-log-mean}.
We also consider the logarithmic version of mean equicontinuity for invariant measures, and show that an ergodic measure is logarithmically mean equicontinuous if and only if it is mean equicontinuous, see Theorem~\ref{thm:mu-log-mean-eq}.
Note that in general, the results of logarithmic averaging and Ces\`aro averaging of empirical measures are different (see \cite{GKL18}). 

\section{Invariant measures and unique ergodicity}

In this section, we first recall results concerning the classical empirical and invariant measures of topological dynamical systems and a characterization of unique ergodicity. 
Then we show that similar results hold for harmonic limits of empirical measures and we obtain a characterization of unique ergodicity by logarithmic mean convergence.
We think that this approach is of independent interest.

Let $(X,d)$ be a compact metric space and $C(X,\bbr)$ be the space of continuous maps from $X$ to $\bbr$ with the supremum norm $\Vert \cdot \Vert_\infty$. 
Let $2^X$ be the hyperspace of $X$, that is, $2^X$ is the collection of non-empty closed subsets of $X$.
The Hausdorff metric on $2^X$ is defined by 
\[
    d_H(A,B)=\inf\{\eps>0\colon A\subset\{y\in X : d(y,B)<\eps\}\ \&\ B\subset\{x\in X : d(x,A)<\eps\}\}.
\]
Here $d(z,C)$ for $z\in X$ and $C\in 2^X$ means $\inf\{d(z,c):c\in C\}$.
Then $(2^X, d_H)$ is a compact metric space.

Let $\mathcal{B}(X)$ be the collection of all Borel subsets of $X$ and $M(X)$ be the collection of all probability measures on $(X,\mathcal{B}(X))$.
The weak$^*$-topology on $M(X)$ is the smallest topology making each of the maps $\mu\mapsto \int f \dd\mu$ ($f\in C(X,\bbr)$) continuous.
Then $M(X)$ is a compact metrizable space. To define a metric on $M(X)$, one
fixes a countable collection $\{f_j\colon j\geq 1\}$ in $C(X,\bbr)$ such that the linear span of $\{f_j\colon j\geq 1\}$ is dense in $C(X,\bbr)$ and for each $j\geq 1$ we have 
$\Vert f\Vert_\infty \leq 1$ 
and $|f_j(x)-f_j(y)|\leq d(x,y)$ for all $x,y\in X$, see e.g.\ \cite[Theorem~11.2.4]{D02}.
Then for $\mu,\nu\in M(X)$ a compatible metric on $M(X)$ is given by
\[
   \rho(\mu,\nu)=\sum_{j\geq 1}\frac{1}{2^j}\biggl|\int f_j \dd\mu- \int f_j\dd \nu\biggr|.
\]

Let $(X,T)$ be a topological dynamical system. 
We say that $\mu\in M(X)$ is \emph{$T$-invariant} if $\mu(B)=\mu(T^{-1}B)$ for every $B\in\mathcal{B}(X)$. If a $T$-invariant measure $\mu$ is such that for every $B\in\mathcal{B}(X)$ with $T^{-1}B=B$ we have either $\mu(B)=0$ or $\mu(B)=1$, then we say that $\mu$ is \emph{ergodic}.
Let $M_T(X)$ and $M^{\mathrm{erg}}_T(X)$ stand for the collection of $T$-invariant measures and ergodic measures, respectively.

Given $x\in X$, we define the \emph{empirical measure along the orbit of $x$} to be an arithmetic average of Dirac measures concentrated on the consecutive points along the orbit, that is, the empirical measure is the measure
\[\frac{1}{n-m}\sum_{k=m+1}^{n}\delta_{T^{k-1}x},
\] 
where $n>m\ge 0$. We consider the following two sets containing all possible Cesàro limits of convergent subsequences of the sequences of empirical measures generated along the orbit of $x$.
We denote 
\[
    V_T(x)=\biggl\{\mu\in M(X)\colon \exists n_i\nearrow \infty 
    \text{ s.t. } \frac{1}{n_i}\sum_{k=1}^{n_i}\delta_{T^{k-1}x} \to \mu\text{ as }i\to\infty \biggr\},
\] 
and 
\[
     \widetilde{V}_T(x)=\biggl\{\mu\in M(X)\colon 
     \exists n_i-m_i\nearrow \infty 
    \text{ s.t. } \frac{1}{n_i-m_i}\sum_{k=m_i+1}^{n_i}\delta_{T^{k-1}x} \to \mu\text{ as }i\to\infty \biggr\}.
\]

The following result follows from the proof of the Kryloff-Bogoliouboff theorem, see e.g.\ \cite[Theorem 4.1]{EW11}.
We write $\overline{V}$ to denote the closure of $V\subset M(X)$ with respect to the weak$^*$ topology on $M(X)$. 

\begin{lem} \label{lem:vtx}
Let $(X,T)$ be a topological dynamical system. If $x\in X$, then 
\[
    \emptyset \neq V_T(x)=\overline{V_T(x)}\subset \widetilde{V}_T(x)=\overline{\widetilde{V}_T(x)}\subset M_T(X).
\]
\end{lem}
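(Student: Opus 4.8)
The plan is to verify the chain of relations one link at a time, since all but one are soft consequences of the compactness and metrizability of $(M(X),\rho)$; the single point where the dynamics genuinely enters is the final inclusion into $M_T(X)$. Throughout, write $\mu_n=\frac{1}{n}\sum_{k=1}^n\delta_{T^{k-1}x}$ for the Cesàro empirical measures and $\nu_{m,n}=\frac{1}{n-m}\sum_{k=m+1}^n\delta_{T^{k-1}x}$ for the two-index averages; both are probability measures, so all their weak$^*$ limits lie in $M(X)$, which is weak$^*$ closed.

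First I would dispatch the easy links. Nonemptiness of $V_T(x)$ is the Krylov--Bogolyubov observation that, since $M(X)$ is weak$^*$ compact, the sequence $(\mu_n)$ admits a convergent subsequence, whose limit belongs to $V_T(x)$. The inclusion $V_T(x)\subset\widetilde{V}_T(x)$ is immediate on taking $m_i=0$ in the definition of $\widetilde{V}_T(x)$, as then $n_i-m_i\nearrow\infty$ reduces to $n_i\nearrow\infty$.

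Next come the two closedness statements, proved by the same diagonal extraction. To see $V_T(x)=\overline{V_T(x)}$, take $\mu\in\overline{V_T(x)}$ and choose $\mu_j\in V_T(x)$ with $\rho(\mu_j,\mu)<1/j$. Each $\mu_j$ is a weak$^*$ limit of a subsequence of $(\mu_n)$, so one may select indices $n(j)\nearrow\infty$ with $\rho(\mu_{n(j)},\mu_j)<1/j$; then $\rho(\mu_{n(j)},\mu)\to 0$, whence $\mu\in V_T(x)$. The identical argument, now selecting pairs $(m(j),n(j))$ with $n(j)-m(j)\nearrow\infty$ so that $\rho(\nu_{m(j),n(j)},\mu_j)<1/j$, gives $\widetilde{V}_T(x)=\overline{\widetilde{V}_T(x)}$. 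I expect this bookkeeping to be the most delicate part of the write-up, although it is entirely standard once the metrizability of the weak$^*$ topology via $\rho$ is invoked.

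The one substantive step is $\widetilde{V}_T(x)\subset M_T(X)$. By density of the linear span of $\{f_j\}$ it suffices to check $\int f\dd\mu=\int f\circ T\dd\mu$ for every $f\in C(X,\bbr)$. For the two-index average a telescoping computation yields
\[
\int (f-f\circ T)\dd\nu_{m,n}=\frac{1}{n-m}\bigl(f(T^mx)-f(T^nx)\bigr),
\]
whose absolute value is at most $2\Vert f\Vert_\infty/(n-m)$. Hence along any defining sequence with $n_i-m_i\nearrow\infty$ the right-hand side tends to $0$, so the limit measure satisfies $\int f\dd\mu=\int f\circ T\dd\mu$ and is therefore $T$-invariant. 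This is where the real (if elementary) content lies, and it simultaneously covers $V_T(x)\subset\widetilde{V}_T(x)\subset M_T(X)$. Assembling the five links gives the displayed chain.
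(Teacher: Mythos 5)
Your proof is correct and is essentially the argument the paper itself invokes: the paper establishes this lemma by citing the standard Krylov--Bogolyubov proof (\cite[Theorem 4.1]{EW11}), which is exactly your combination of weak$^*$ compactness of $M(X)$ for nonemptiness, diagonal extraction for the two closedness claims, and the telescoping bound $\bigl|\int (f-f\circ T)\dd\nu_{m,n}\bigr|\leq 2\Vert f\Vert_\infty/(n-m)$ for the inclusion into $M_T(X)$. No gaps to report.
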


If $V_T(x)=\{\mu\}$, then we say that $x\in X$ is a \emph{generic point} for $\mu$.
If $\mu\in V_T(x)$, then we say that $x\in X$ is \emph{quasi-generic} for $\mu$. Non-ergodic measures may have no generic points, while generic points always exist for ergodic measures. 

\begin{lem}[{see e.g.\ \cite[Corollary 4.20]{EW11}}] \label{lem:ergodic-generic-points}
Let $(X,T)$ be a topological dynamical system and $\mu\in M^{\mathrm{erg}}_T(X)$.
Then $\mu$-almost every point in $X$ is generic for $\mu$.
\end{lem}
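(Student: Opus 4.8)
The plan is to deduce the statement from Birkhoff's pointwise ergodic theorem together with the separability of $C(X,\bbr)$, which is exactly what allows us to upgrade per-function almost-everywhere convergence of Birkhoff averages to a single full-measure set of generic points. Throughout I would write $\mu^x_n=\frac1n\sum_{k=1}^n\delta_{T^{k-1}x}$ for the empirical measures, so that $\int f\dd\mu^x_n=\frac1n\sum_{k=1}^n f(T^{k-1}x)$, and genericity of $x$ for $\mu$ amounts to $\mu^x_n\to\mu$ in the weak$^*$ topology.

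First I would fix the countable family $\{f_j:j\ge 1\}$ used to define the metric $\rho$ on $M(X)$; recall that its linear span is dense in $C(X,\bbr)$ and that $\Vert f_j\Vert_\infty\le 1$ for each $j$. For each fixed $j$, Birkhoff's ergodic theorem applied to $f_j\in L^1(\mu)$ produces a $\mu$-null set $N_j$ such that for every $x\notin N_j$,
\[
    \frac1n\sum_{k=1}^n f_j(T^{k-1}x)\xrightarrow[n\to\infty]{}\int f_j\dd\mu,
\]
where the limit is the \emph{constant} $\int f_j\dd\mu$ precisely because ergodicity of $\mu$ forces the conditional expectation onto the $\sigma$-algebra of $T$-invariant sets to be $\mu$-almost everywhere constant.

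Next I would set $N=\bigcup_{j\ge 1}N_j$, which is still $\mu$-null, and verify that every $x\in X\setminus N$ is generic for $\mu$. Indeed, for such $x$ the previous step gives $\int f_j\dd\mu^x_n\to\int f_j\dd\mu$ for every $j$. Since each term $\bigl|\int f_j\dd\mu^x_n-\int f_j\dd\mu\bigr|$ is bounded by $2$ and tends to $0$, a dominated-convergence argument applied to the series defining $\rho$ yields $\rho(\mu^x_n,\mu)\to 0$; equivalently, convergence of integrals against the dense linear span of $\{f_j\}$ forces weak$^*$ convergence $\mu^x_n\to\mu$. Hence $V_T(x)=\{\mu\}$ for all $x\notin N$, so $\mu$-almost every point is generic for $\mu$, as claimed.

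I expect the only genuinely delicate point to be the passage from a separate null set $N_j$ for each test function to a single null set valid simultaneously for all of them: this is exactly where separability of $C(X,\bbr)$, encoded in the countable family $\{f_j\}$, is indispensable. Without it one would obtain convergence of individual Birkhoff averages but not the weak$^*$ convergence of the empirical measures that genericity requires. The remaining ingredients—identifying the ergodic limit as $\int f_j\dd\mu$ and transferring convergence of integrals to weak$^*$ convergence of measures via $\rho$—are routine once this countable family has been fixed.
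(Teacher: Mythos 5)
Your proof is correct: the combination of Birkhoff's pointwise ergodic theorem (applied to each $f_j$, using ergodicity to identify the limit as the constant $\int f_j\dd\mu$), the union of the countably many null sets, and the passage from convergence of integrals against the $f_j$ to weak$^*$ convergence via the metric $\rho$ is exactly the standard argument. The paper offers no proof of its own here—it simply cites \cite[Corollary~4.20]{EW11}, whose proof is precisely this Birkhoff-plus-separability argument—so your proposal matches the intended proof in all essentials.
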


If $M_T(X)$ is a singleton, we say that $(X,T)$
is \emph{uniquely ergodic}. Since $M^{\mathrm{erg}}_T(X)$ is the collection of extreme points of $M_T(X)$, the system
$(X,T)$ is uniquely ergodic if and only if $M^{\mathrm{erg}}_T(X)$ is a singleton.

The following result is folklore, it follows from Lemma~\ref{lem:vtx} combined with the definition of unique ergodicity and the definitions of sets $V_T(x)$ and $\widetilde{V}_T(x)$.

\begin{prop}\label{prop:unique-erg-vtx}
Let $(X,T)$ be a topological dynamical system. Then the following assertions are equivalent:
\begin{enumerate}
    \item $(X,T)$ is uniquely ergodic;
    \item there exists  $\mu\in M_T(X)$ such that 
    for every $x\in X$, $V_T(x)=\{\mu\}$;
    \item there exists  $\mu\in M_T(X)$ such that 
    for every $x\in X$, $\widetilde{V}_T(x)=\{\mu\}$.
\end{enumerate}
\end{prop}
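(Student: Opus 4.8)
The plan is to prove the three assertions equivalent by running the cycle $(1)\Rightarrow(3)\Rightarrow(2)\Rightarrow(1)$, letting Lemma~\ref{lem:vtx} and Lemma~\ref{lem:ergodic-generic-points} carry all the weight. The organizing principle throughout is the single chain of inclusions $\emptyset\neq V_T(x)\subset\widetilde{V}_T(x)\subset M_T(X)$, which holds for every $x\in X$.

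For $(1)\Rightarrow(3)$ I would argue as follows. If $(X,T)$ is uniquely ergodic, then $M_T(X)=\{\mu\}$ for the unique invariant measure $\mu$. For each $x$ the set $\widetilde{V}_T(x)$ is nonempty, since by Lemma~\ref{lem:vtx} it contains the nonempty set $V_T(x)$, and it is contained in $M_T(X)=\{\mu\}$; hence it equals $\{\mu\}$. The implication $(3)\Rightarrow(2)$ is just as quick: if $\widetilde{V}_T(x)=\{\mu\}$ for all $x$, then the inclusion $\emptyset\neq V_T(x)\subset\widetilde{V}_T(x)=\{\mu\}$ forces $V_T(x)=\{\mu\}$ for every $x$.

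The implication $(2)\Rightarrow(1)$ is the one requiring an actual idea, and it is where Lemma~\ref{lem:ergodic-generic-points} enters. Assuming there is $\mu\in M_T(X)$ with $V_T(x)=\{\mu\}$ for every $x$, I would show that $M^{\mathrm{erg}}_T(X)$ is a singleton, which by the extreme-point characterization recalled just before the statement is equivalent to unique ergodicity. To this end, take any $\nu\in M^{\mathrm{erg}}_T(X)$. By Lemma~\ref{lem:ergodic-generic-points}, $\nu$-almost every point is generic for $\nu$; since $\nu$ is a probability measure, at least one such point $x$ exists, and for it $V_T(x)=\{\nu\}$. But hypothesis $(2)$ gives $V_T(x)=\{\mu\}$, so $\nu=\mu$. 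Hence $M^{\mathrm{erg}}_T(X)=\{\mu\}$ and the system is uniquely ergodic.

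The main, and essentially only, delicate point is the existence of a generic point in $(2)\Rightarrow(1)$: one must exhibit an actual point of $X$ witnessing the ergodic measure $\nu$, which is precisely the content of Lemma~\ref{lem:ergodic-generic-points} (and the place where the theory of generic points, rather than mere set-theoretic manipulation, is needed). Everything else reduces to bookkeeping with the chain of inclusions supplied by Lemma~\ref{lem:vtx}.
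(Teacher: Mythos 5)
Your proof is correct and matches the paper's intent: the paper dismisses this proposition as folklore following from Lemma~\ref{lem:vtx}, the definitions, and (implicitly) Lemma~\ref{lem:ergodic-generic-points}, and your cycle $(1)\Rightarrow(3)\Rightarrow(2)\Rightarrow(1)$ is precisely the argument the paper itself spells out for the logarithmic analogue, Proposition~\ref{prop:uniqlue-ergodic-vlogt}. In particular, your use of generic points of an ergodic measure to settle $(2)\Rightarrow(1)$ via the extreme-point characterization is exactly the step the authors rely on there.
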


The following theorem is a variant of the standard Oxtoby's characterization of unique ergodicity.

\begin{thm}\label{thm:unique-erg} 
Let $(X,T)$ be a topological dynamical system. Then the following assertions are equivalent:
\begin{enumerate}
    \item $(X,T)$ is uniquely ergodic;
    \item there exists  $\mu\in M_T(X)$ such that 
     for any $f\in C(X,\bbr)$  and $x\in X$ we have 
    \[
        \lim_{n\to\infty}\frac{1}{n} \sum_{k=1}^n f(T^{k-1}x)=\int f\dd\mu;
    \]
    \item for any $f\in C(X,\bbr)$, the functions $x\mapsto \frac{1}{n} \sum_{k=1}^n f(T^{k-1}x)$ converge pointwise to a constant function as $n\to\infty$;
        \item for any $f\in C(X,\bbr)$, the functions $x\mapsto \frac{1}{n} \sum_{k=1}^n f(T^{k-1}x)$ converge uniformly  to a constant function as $n\to\infty$;
     \item there exists  $\mu\in M_T(X)$ such that 
     for any $f\in C(X,\bbr)$  and $x\in X$ we have 
    \[
        \lim_{n-m\to\infty} \frac{1}{n-m} \sum_{k=m+1}^n f(T^{k-1}x)= \int f\dd\mu;
    \]
    \item  for any $f\in C(X,\bbr)$, the functions
    $x\mapsto \frac{1}{n-m} \sum_{k=m+1}^n f(T^{k-1}x)$ converge pointwise  to a constant function as $n-m\to\infty$;
    \item  for any $f\in C(X,\bbr)$, the functions
    $x\mapsto \frac{1}{n-m} \sum_{k=m+1}^n f(T^{k-1}x)$ converge uniformly to a constant function as $n-m\to\infty$.
\end{enumerate}
In addition, the constant function mentioned in assertions (3),~(4),~(6) and (7) is always the function $x\mapsto\int f\dd\mu$, where $\mu$ is the unique $T$-invariant measure. 
\end{thm}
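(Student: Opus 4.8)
The plan is to prove the chain of equivalences in Theorem~\ref{thm:unique-erg} by establishing a cycle of implications, leveraging the structure already provided by Proposition~\ref{prop:unique-erg-vtx} and Lemma~\ref{lem:vtx}. The assertions split naturally into two clusters: (1)--(4) concern ordinary Ces\`aro averages, while (5)--(7) concern the ``floating'' Ces\`aro averages with $n-m\to\infty$. My strategy is to show $(1)\Leftrightarrow(2)\Leftrightarrow(3)\Leftrightarrow(4)$ and $(1)\Leftrightarrow(5)\Leftrightarrow(6)\Leftrightarrow(7)$, using (1) as the common hub.

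First I would observe that assertion (2) is essentially a restatement of Proposition~\ref{prop:unique-erg-vtx}(2) in functional language: the condition $V_T(x)=\{\mu\}$ for all $x$ says precisely that every weak$^*$-convergent subsequence of the empirical measures $\frac{1}{n}\sum_{k=1}^n \delta_{T^{k-1}x}$ has limit $\mu$, which by compactness of $M(X)$ forces full convergence, i.e.\ $\frac{1}{n}\sum_{k=1}^n f(T^{k-1}x)\to\int f\dd\mu$ for every $f\in C(X,\bbr)$. Thus $(1)\Leftrightarrow(2)$ follows directly. Similarly, $(1)\Leftrightarrow(5)$ comes from Proposition~\ref{prop:unique-erg-vtx}(3) applied to $\widetilde{V}_T(x)$. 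The implications $(2)\Rightarrow(3)$, $(4)\Rightarrow(2)$, $(5)\Rightarrow(6)$ and $(7)\Rightarrow(5)$ are immediate, since convergence to $\int f\dd\mu$ is convergence to a constant, and uniform convergence implies pointwise convergence.

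The substantive work lies in the upgrades from pointwise to uniform convergence, namely $(3)\Rightarrow(4)$ and $(6)\Rightarrow(7)$, and in closing the loop by showing that pointwise convergence to a (possibly $f$-dependent but $x$-independent) constant already pins down unique ergodicity. For the latter, if the averages converge pointwise to a constant $c_f$, then integrating against any invariant measure $\nu$ and using the dominated convergence theorem together with invariance shows $\int f\dd\nu=c_f$ for every $\nu\in M_T(X)$; since this holds for all $f\in C(X,\bbr)$, any two invariant measures agree, giving unique ergodicity, so $(3)\Rightarrow(1)$ and $(6)\Rightarrow(1)$. For the uniform convergence, the key tool is the classical Oxtoby argument: under unique ergodicity the averages converge pointwise to the constant $\int f\dd\mu$, and a standard equicontinuity/compactness argument (or a contradiction argument extracting a sequence $x_i$ with averages bounded away from the constant, then passing to a quasi-generic limit measure) promotes this to uniform convergence. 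I expect the main obstacle to be handling the floating-window case $(6)\Rightarrow(7)$ cleanly: the extra freedom in choosing $m_i$ means a direct contradiction argument must simultaneously track subsequences $n_i-m_i\to\infty$ and extract a limit measure in $\widetilde{V}_T(x_i)$, but Lemma~\ref{lem:vtx} guarantees such limits remain in $M_T(X)=\{\mu\}$, which resolves the difficulty. The final sentence of the theorem, identifying the constant as $x\mapsto\int f\dd\mu$, then follows automatically from the identifications made along the way.
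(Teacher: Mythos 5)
Your proposal is correct, but it closes the diagram along a genuinely different route than the paper. The paper's proof is very short: it cites the classical Oxtoby equivalence for (1)--(4), gets (1)$\Leftrightarrow$(5) from Proposition~\ref{prop:unique-erg-vtx}, observes (7)$\Rightarrow$(6)$\Rightarrow$(3) is trivial, and then does only one piece of real work, namely (4)$\Rightarrow$(7), via the re-basing identity $\frac{1}{n-m}\sum_{k=m+1}^n f(T^{k-1}x)=\frac{1}{n-m}\sum_{k=1}^{n-m}f\bigl(T^{k-1}(T^m x)\bigr)$: since the convergence in (4) is uniform over \emph{all} points, it automatically covers all shifted starting points $T^m x$, so floating windows come for free. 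You instead use (1) as a hub: you prove pointwise-to-constant convergence forces unique ergodicity by integrating against an arbitrary invariant measure with dominated convergence, and you upgrade pointwise to uniform convergence (in both the fixed-window and floating-window cases) by the compactness/contradiction argument that extracts a sequence of empirical measures bounded away from $\mu$ and passes to an invariant weak$^*$ limit. Both are valid; the paper's trick is more economical, while your argument is more self-contained (it essentially re-proves the Oxtoby equivalences rather than citing them) and is exactly the argument the paper itself deploys later for the logarithmic case, in (3)$\Rightarrow$(6) of Theorem~\ref{thm:unique-erg-log-mean}, so it transfers verbatim to that setting. One small imprecision to fix: in your (6)$\Rightarrow$(7) step you invoke Lemma~\ref{lem:vtx} to say the limit measure is invariant, but that lemma concerns $\widetilde{V}_T(x)$ for a \emph{fixed} $x$, whereas your contradiction argument produces empirical measures based at a sequence of \emph{different} points $x_i$ (equivalently, windows re-based at $y_i=T^{m_i}x_i$ with lengths $l_i=n_i-m_i\to\infty$). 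What you actually need is the varying-base-point invariance fact, i.e.\ the Ces\`{a}ro analogue of Lemma~\ref{lem:vlogT-seq}: if $\nu_i=\frac{1}{l_i}\sum_{k=1}^{l_i}\delta_{T^{k-1}y_i}\to\nu$ with $l_i\to\infty$, then $\bigl|\int (f-f\circ T)\dd\nu_i\bigr|\le 2\Vert f\Vert_\infty/l_i\to 0$, so $\nu\in M_T(X)$. This is a one-line computation, so the gap is cosmetic rather than substantive, but the citation as written does not support the step.
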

\begin{proof}
    The equivalence of assertions (1)--(4) follows from \cite[\S 5.3, p.~124]{O52}, see also \cite[Theorem 4.10]{EW11}.

    The equivalence (1)$\Leftrightarrow$(5) follows from Proposition~\ref{prop:unique-erg-vtx} and the definition of the compatible metric $\rho$ on $M(X)$. 

    The implications (7)$\Rightarrow$(6)$\Rightarrow$(3) are obvious.

    (4)$\Rightarrow$(7). Fix $f\in C(X,\bbr)$. It follows from (4) that 
    for some $c\in\bbr$, for every $\eps>0$, there exists $N\in\bbn$ so that for any $n\geq N$ it holds 
    \[
    \sup_{x\in X}\biggl|\frac{1}{n} \sum_{k=1}^nf(T^{k-1}x) -c\biggr|<\eps.
    \]
    Hence, if $n-m\geq N$, then
    \[
    \sup_{x\in X}\biggl|\frac{1}{n-m} \sum_{k=m+1}^n f(T^{k-1}x) -c\biggr|
    =\sup_{x\in X}\biggl|\frac{1}{n-m} \sum_{k=1}^{n-m}f(T^{k-1}(T^mx) -c\biggr|<\eps.
    \]
    Thus, the functions
    $x\mapsto \frac{1}{n-m} \sum_{k=m+1}^n f(T^{k-1}x)$ converge uniformly to a constant function as $n-m\to\infty$.
\end{proof}

\begin{rem}
As noticed in \cite[page 126]{O52}, the equivalent assertions presented in Theorem~\ref{thm:unique-erg} do not hold for noncontinuous functions. 
Let $X=\mathbb{R}/\mathbb{Z}$ be the unit circle and $T\colon X\to X$ be given by $x\mapsto x+\alpha\pmod 1$ for some irrational $\alpha$.
Then the Lebesgue measure $m$ is the unique $T$-invariant measure on $X$.
Let $U=\bigcup_{n\geq 1} (n\alpha-\frac{1}{2^{n+1}},n\alpha+
\frac{1}{2^{n+1}})$. 
Then $m(U)\leq \frac{1}{2}$.
Let $f$ be the characteristic function of $U$.
By the Birkhoff ergodic theorem, there exists a measurable subset $X_0$ of $X$ with $m(X_0)=1$ such that for any $x\in X_0$ we have 
 \[
        \lim_{n\to\infty}\frac{1}{n} \sum_{k=1}^n f(T^{k-1}x)=m(U).
 \]
It is clear that $\lim_{n\to\infty}\frac{1}{n} \sum_{k=1}^n f(T^{k-1}0)=1$. Hence, $0\not\in X_0$, so the convergence is not uniform on $X$.
Furthermore, for each $n\geq 1$, there exists a nondegenerate interval $Z_n$ in $X$
such that for any $z\in Z_n$ we have $\frac{1}{n}\sum_{k=1}^n f(T^{k-1}x)=1$.
So the convergence is also not uniform on the set $X_0$.
\end{rem} 

Recall that $H_n=\sum_{k=1}^n \frac{1}{k}$ for each $n\in\bbn$.  
Note that $\lim_{n\to\infty}\frac{H_n}{\log n} =1$. 

Using the summation by parts trick, for any sequence $(x_n)$ in $\bbr$ and $n\ge 1$, one has the following classical relation between Cesàro averages and harmonic averages of the sequence: 
\begin{align*}
    \frac{1}{H_n} \sum_{k=1}^n \frac{1}{k}x_k &=
    \frac{1}{H_n}\biggl(x_1+\sum_{k=2}^n\frac{1}{k}\biggl(\sum_{i=1}^kx_i -\sum_{i=1}^{k-1}x_i\biggr)\biggr)\\
    &=\frac{1}{H_n}\biggl(\frac{1}{n}\sum_{k=1}^n x_k +\sum_{k=1}^{n-1}\frac{1}{(k+1)k}\sum_{i=1}^kx_i\biggr) \\
&= \frac{1}{H_n}\biggl(\frac{1}{n}\sum_{k=1}^n x_k\biggr) +
   \frac{1}{H_n} \sum_{k=1}^{n-1}\frac{1}{k+1} \biggl(\frac{1}{k}\sum_{i=1}^kx_i\biggr).
\end{align*}
Then we have the following folklore lemma.

\begin{lem}\label{lem:log-mean-seq}
Let $(x_n)$ be a bounded sequence in $\bbr$. 
Then
\[
\liminf_{n\to\infty} \frac{1}{n}\sum_{k=1}^n x_k \leq \liminf_{n\to\infty} \frac{1}{H_n} \sum_{k=1}^n \frac{1}{k}x_k\leq \limsup_{n\to\infty} \frac{1}{H_n} \sum_{k=1}^n \frac{1}{k}x_k \leq \limsup_{n\to\infty}\frac{1}{n}\sum_{k=1}^n x_k.
\]
\end{lem}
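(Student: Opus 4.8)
The plan is to exploit the summation-by-parts identity displayed immediately before the lemma, which expresses the harmonic average $\tfrac{1}{H_n}\sum_{k=1}^n\tfrac{1}{k}x_k$ in terms of the arithmetic (Cesàro) averages of $(x_n)$. Write $A_k=\tfrac{1}{k}\sum_{i=1}^k x_i$ for the $k$th Cesàro average, and set $L=\limsup_{n\to\infty}A_n$ and $\ell=\liminf_{n\to\infty}A_n$. The middle inequality is trivial, and the two outer inequalities are exchanged by replacing $(x_n)$ with $(-x_n)$; hence it suffices to prove the single inequality $\limsup_{n\to\infty}\tfrac{1}{H_n}\sum_{k=1}^n\tfrac{1}{k}x_k\le L$.

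In terms of the $A_k$, the displayed identity reads
\[
\frac{1}{H_n}\sum_{k=1}^n\frac{1}{k}x_k=\frac{A_n}{H_n}+\frac{1}{H_n}\sum_{k=1}^{n-1}\frac{1}{k+1}A_k.
\]
The first step is to dispose of the leading term: since $(x_n)$ is bounded, the averages $A_n$ are bounded, while $H_n\to\infty$, so $A_n/H_n\to 0$.

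The heart of the argument is to show that the limsup of the second term is at most $L$. I would recognize this term as a weighted average of the $A_k$, with weights $w_k^{(n)}=\tfrac{1}{(k+1)H_n}$ that are nonnegative, satisfy $\sum_{k=1}^{n-1}w_k^{(n)}=(H_n-1)/H_n\to 1$, and obey $w_k^{(n)}\to 0$ as $n\to\infty$ for each fixed $k$. These are exactly the hypotheses of a Toeplitz-type regularity argument. Concretely, fix $\eps>0$ and choose $K$ with $A_k<L+\eps$ for all $k\ge K$; splitting the sum at $K$, the head $\sum_{k<K}w_k^{(n)}A_k$ tends to $0$ because its numerator is fixed while $H_n\to\infty$, and the tail is bounded above by $(L+\eps)\sum_{k\ge K}w_k^{(n)}$, whose limit is $L+\eps$. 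Letting $\eps\downarrow 0$ yields the claimed bound.

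I expect the only delicate point to be the bookkeeping in this Toeplitz step, in particular the behaviour of the tail estimate with respect to the sign of $L+\eps$: the factor $\sum_{k\ge K}w_k^{(n)}$ genuinely converges to $1$, so the product converges to $L+\eps$ irrespective of sign, but this should be phrased carefully rather than inferred from a one-sided limsup bound. Combining the two terms via $\limsup(a_n+b_n)\le\limsup a_n+\limsup b_n$ then gives $\limsup_{n\to\infty}\tfrac{1}{H_n}\sum_{k=1}^n\tfrac{1}{k}x_k\le L$, and applying this to $(-x_n)$ supplies the matching lower bound $\liminf_{n\to\infty}\tfrac{1}{H_n}\sum_{k=1}^n\tfrac{1}{k}x_k\ge\ell$, completing the proof.
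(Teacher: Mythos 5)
Your proposal is correct and follows exactly the route the paper intends: it starts from the summation-by-parts identity displayed immediately before the lemma and completes the standard Toeplitz-type regularity argument (vanishing leading term, weights $\tfrac{1}{(k+1)H_n}$ summing to $(H_n-1)/H_n\to 1$, splitting at a fixed index $K$), details the paper omits by labelling the lemma folklore. Your handling of the sign issue in the tail estimate and the reduction of the liminf bound to the limsup bound via $(-x_n)$ are both sound, so nothing is missing.
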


Let $(X,T)$ be a topological dynamical system. 
To define \emph{logarithmic empirical measures}, we take harmonic average of Dirac measures concentrated on sequence of consecutive points along the orbit of $x\in X$, that is, the logarithmic empirical measure is the measure
\[\frac{1}{H_{n-m}}\sum_{k=m+1}^{n}\frac{1}{k-m}\delta_{T^{k-1}x},
\] 
where $n>m\ge 0$. We now consider sets containing all possible accumulation points of the sequences of empirical measures generated along the orbit of $x$.
For $x\in X$, we define 
\[
 \vlog_T(x)=\biggl\{\mu\in M(X)\colon \exists n_i\nearrow \infty 
    \text{ s.t. } \frac{1}{H_{n_i}}\sum_{k=1}^{n_i} \frac{1}{k}\delta_{T^{k-1}x} \to \mu\text{ as }i\to\infty \biggr\}
\]
and 
\[
 \vtlog_T(x)=\biggl\{\mu\in M(X)\colon \exists n_i-m_i\nearrow \infty 
    \text{ s.t. } \frac{1}{H_{n_i-m_i}}\sum_{k=m_i+1}^{n_i} \frac{1}{k-m_i}\delta_{T^{k-1}x} \to \mu\text{ as }i\to\infty \biggr\}.
\]
If $\vlog_T(x)=\{\mu\}$, then we say that $x\in X$ is \emph{logarithmically generic} for $\mu$.
If $\mu\in \vlog(x)$, then we say that $x\in X$ is \emph{logarithmically quasi-generic} for $\mu$.

The proof of the following lemma is similar to that of \cite[Theorem 4.1]{EW11}, but we provide it for the sake of completeness. 

\begin{lem}\label{lem:vlogT-seq}
Let $(X,T)$ be a topological dynamical system. If $(x_m)_{m=1}^\infty$ is a sequence in $X$ and $\mu\in M(X)$ is such that
\[
\mu=\lim_{m\to\infty} \frac{1}{H_{n_m}}\sum_{k=1}^{n_m} \frac{1}{k}\delta_{T^{k-1}x_m},
\]
where $n_m\nearrow\infty$ as $m\to\infty$, then $\mu \in M_T(X)$.
\end{lem}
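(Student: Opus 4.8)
The plan is to verify the $T$-invariance of $\mu$ directly from its defining property on continuous test functions. Recall that $\mu\in M_T(X)$ if and only if $\int f\circ T\dd\mu=\int f\dd\mu$ for every $f\in C(X,\bbr)$, since a Borel probability measure is determined by the integrals of continuous functions and invariance is equivalent to equality of these integrals for $f$ and $f\circ T$. So I would fix $f\in C(X,\bbr)$ and abbreviate the logarithmic empirical measures as $\nu_m=\frac{1}{H_{n_m}}\sum_{k=1}^{n_m}\frac1k\delta_{T^{k-1}x_m}$. Because $\nu_m\to\mu$ in the weak$^*$ topology and both $f$ and $f\circ T$ are continuous, I have $\int f\dd\nu_m\to\int f\dd\mu$ and $\int f\circ T\dd\nu_m\to\int f\circ T\dd\mu$. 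Hence the whole statement reduces to showing that $\int f\circ T\dd\nu_m-\int f\dd\nu_m\to0$ as $m\to\infty$.

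Writing out the two integrals and setting $g_k=f(T^{k-1}x_m)$ to lighten the notation, this difference equals
\[
\int f\circ T\dd\nu_m-\int f\dd\nu_m=\frac{1}{H_{n_m}}\sum_{k=1}^{n_m}\frac1k\bigl(g_{k+1}-g_k\bigr).
\]
The crux is a summation-by-parts estimate on the inner sum. In the Ces\`aro situation the analogous difference telescopes to a single boundary term $\tfrac1n(f(T^nx)-f(x))$; here the harmonic weights $1/k$ do not cancel cleanly, so I would instead reindex $\sum_{k=1}^{N}\frac1k g_{k+1}$ and recombine it with $\sum_{k=1}^{N}\frac1k g_k$ to obtain
\[
\sum_{k=1}^{N}\frac1k(g_{k+1}-g_k)=\sum_{j=2}^{N}\frac{1}{j(j-1)}\,g_j+\frac1N g_{N+1}-g_1.
\]
Since $|g_j|\le\Vert f\Vert_\infty$ for every $j$ and $\sum_{j=2}^{N}\frac{1}{j(j-1)}=1-\frac1N\le1$, the right-hand side is bounded in absolute value by $3\Vert f\Vert_\infty$, uniformly in $N$.

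Consequently $\bigl|\int f\circ T\dd\nu_m-\int f\dd\nu_m\bigr|\le 3\Vert f\Vert_\infty/H_{n_m}$, which tends to $0$ because $H_{n_m}\to\infty$. Passing to the limit then yields $\int f\circ T\dd\mu=\int f\dd\mu$ for every $f\in C(X,\bbr)$, so $\mu\in M_T(X)$, as claimed. I expect the only genuine obstacle to be the summation-by-parts bookkeeping: one must recognize that, although the harmonic weights fail to telescope, the residual series $\sum_{j}\frac{1}{j(j-1)}g_j$ they generate is absolutely summable uniformly in $N$, so that the inner sum stays bounded while the normalizing factor $H_{n_m}$ diverges and annihilates the difference.
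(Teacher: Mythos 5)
Your proof is correct and follows essentially the same route as the paper's: reduce $T$-invariance to $\int f\circ T\dd\mu=\int f\dd\mu$ for continuous $f$ via weak$^*$ convergence, then use summation by parts to show the inner sum $\sum_{k=1}^{N}\frac1k(g_{k+1}-g_k)$ is bounded by $3\Vert f\Vert_\infty$ uniformly in $N$, so dividing by $H_{n_m}\to\infty$ kills the difference. Your identity $\sum_{k=1}^{N}\frac1k(g_{k+1}-g_k)=\sum_{j=2}^{N}\frac{1}{j(j-1)}g_j+\frac1N g_{N+1}-g_1$ is exactly the paper's estimate up to reindexing.
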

\begin{proof} 
Fix  $f\in C(X,\bbr)$. 
We have
\[
\biggl| \int f\dd\mu- \int f\circ T\dd\mu\biggr | = 
\biggl| 
\lim_{m\to\infty} \frac{1}{{H_{n_m}}} 
    \sum_{k=1}^{n_m}\frac{1}{k} \left(f(T^{k-1}x_m)- f(T^kx_m)\right) \biggr|.
\]
For every $m\ge 1$ it also holds
\begin{multline*}\biggl| 
\frac{1}{{H_{n_m}}} 
    \sum_{k=1}^{n_m}\frac{1}{k} \left(f(T^{k-1}x_m)- f(T^kx_m)\right) \biggr|\leq\\
\frac{1}{{H_{n_m}}}   \biggl| f(x_m) -\sum_{k=1}^{n_m-1} \frac{1}{k(k+1)} f(T^k x_m)-\frac{1}{n_m}f(T^{n_m}x_m) \biggr|\leq\\
\frac{1}{{H_{n_m}}} \biggl(2+\sum_{k=1}^{n_m}\frac{1}{k(k+1)}\biggr)\Vert f\Vert_\infty .
\end{multline*}
Finally, note that
\[
\lim_{m\to\infty} \frac{1}{{H_{n_m}}} \biggl(2+\sum_{k=1}^{n_m}\frac{1}{k(k+1)}\biggr)\Vert f\Vert_\infty =0.
\]
Therefore $\int f\dd\mu= \int f\circ T\dd\mu$ for every $f\in C(X,\bbr)$. Hence $\mu$ is an invariant measure. 
\end{proof}

As a consequence, we obtain an analogue of Lemma~\ref{lem:vtx}.

\begin{lem}\label{lem:vlogT-x}
Let $(X,T)$ be a topological dynamical system. Then for any $x\in X$ the sets $\vlog_T(x)$ and $\vtlog_T(x)$ are closed in $M(X)$ and
\[
    \emptyset \neq \vlog_T(x)\subset \vtlog_T(x)\subset M_T(X).
\]
\end{lem}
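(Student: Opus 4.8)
The plan is to follow the scheme of the proof of Lemma~\ref{lem:vtx}, reducing the invariance statement to the already established Lemma~\ref{lem:vlogT-seq}. I need to check four things: that $\vlog_T(x)$ is nonempty, that $\vlog_T(x)\subset\vtlog_T(x)$, that $\vtlog_T(x)\subset M_T(X)$, and that both sets are closed in $M(X)$.

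Nonemptiness and the first inclusion are soft. Writing $\nu_n=\frac{1}{H_n}\sum_{k=1}^{n}\frac{1}{k}\delta_{T^{k-1}x}$, the sequence $(\nu_n)$ lies in the weak$^*$-compact metrizable space $M(X)$, so it admits a convergent subsequence whose limit lies in $\vlog_T(x)$; hence $\vlog_T(x)\neq\emptyset$. For $\vlog_T(x)\subset\vtlog_T(x)$ I would simply take $m_i=0$ in the definition of $\vtlog_T(x)$: the windowed measure then collapses to $\nu_{n_i}$ and the admissibility condition $n_i-m_i\nearrow\infty$ becomes $n_i\nearrow\infty$, so every element of $\vlog_T(x)$ is captured by the windowed family as well.

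The analytic core is $\vtlog_T(x)\subset M_T(X)$. Given $\mu\in\vtlog_T(x)$ realized by windows with $N_i:=n_i-m_i\nearrow\infty$, I would reindex via $j=k-m_i$, using $T^{k-1}x=T^{j-1}(T^{m_i}x)$, so that the windowed measure becomes $\frac{1}{H_{N_i}}\sum_{j=1}^{N_i}\frac{1}{j}\delta_{T^{j-1}y_i}$ with base points $y_i=T^{m_i}x$. This is exactly the form to which Lemma~\ref{lem:vlogT-seq} applies (with the sequence $(y_i)$ in place of $(x_m)$ and $N_i$ in place of $n_m$), and it forces $\mu\in M_T(X)$.

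It remains to prove closedness, which I expect to be the one genuinely technical point. For $\vlog_T(x)$ there is nothing to do beyond a standard observation: it is the set of cluster points of the single sequence $(\nu_n)$, and such a set equals $\bigcap_{N\geq 1}\overline{\{\nu_n:n\geq N\}}$, hence is closed. For $\vtlog_T(x)$ the family is indexed by windows rather than by a sequence, so I would set $W_L=\{\,\frac{1}{H_{n-m}}\sum_{k=m+1}^{n}\frac{1}{k-m}\delta_{T^{k-1}x}:n>m\geq 0,\ n-m\geq L\,\}$ and show $\vtlog_T(x)=\bigcap_{L\geq 1}\overline{W_L}$, an intersection of closed sets. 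The inclusion ``$\subset$'' is immediate, while ``$\supset$'' is the delicate direction: for $\mu$ in every $\overline{W_L}$ I would make a diagonal choice, picking for each $\ell$ a window of length $n_\ell-m_\ell\geq\ell$ with $\rho$-distance to $\mu$ less than $1/\ell$ and arranging the lengths to be strictly increasing, which yields an admissible sequence witnessing $\mu\in\vtlog_T(x)$. Controlling this diagonal extraction so that $n_\ell-m_\ell\nearrow\infty$ while the measures converge is the main obstacle; everything else is formal.
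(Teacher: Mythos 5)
Your proposal is correct and follows essentially the same route as the paper: nonemptiness from compactness of $M(X)$, the inclusion $\vlog_T(x)\subset\vtlog_T(x)$ by taking $m_i=0$, and the key invariance step $\vtlog_T(x)\subset M_T(X)$ by applying Lemma~\ref{lem:vlogT-seq} to the shifted points $y_i=T^{m_i}x$ with lengths $n_i-m_i$, exactly as the paper does. The only difference is that the paper dismisses closedness as clear, whereas you spell it out via the identity $\vtlog_T(x)=\bigcap_{L\geq 1}\overline{W_L}$ and a diagonal extraction with strictly increasing window lengths --- a correct and complete way to fill in that omitted detail.
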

\begin{proof}
Fix $x\in X$. It is clear that $\vlog_T(x)\subset \vtlog_T(x)$ and that both sets are closed in $M(X)$. By compactness of $M(X)$, one has $\vlog_T(x)\neq\emptyset$.
It remains to show that $\vtlog_T(x)\subset M_T(X)$.
Let $\mu\in\vtlog_T(x)$. There there exist two sequences $(n_i)$ and $(m_i)$ in $\bbn$ with $n_i-m_i\nearrow \infty $ as $i\to\infty$ such that 
\[
\lim_{i\to\infty} \frac{1}{H_{n_i-m_i}}\sum_{k=m_i+1}^{n_i} \frac{1}{k-m_i}\delta_{T^{k-1}x} = \mu.
\]
We finish the proof applying Lemma \ref{lem:vlogT-seq} to sequences $y_i=T^{m_i}x$ and $l_i=n_i-m_i$.
\end{proof}

\begin{rem}
By the proof of Lemma~\ref{lem:vlogT-seq}, for every $x\in X$ we have
$\vlog_T(x)=\vlog_T(Tx)$ and $\vtlog_T(x)=\vtlog_T(Tx)$.
\end{rem}

By Lemma~\ref{lem:log-mean-seq}, it is easy to see that
if $V_T(x)=\{\mu\}$, then $\vlog_T(x)=\{\mu\}$. This was noted in \cite{GKL18}. In fact, as a consequence of the summation by parts trick, we have the following general result.

\begin{prop}[{\cite[Proposition 2.1]{GKL18}}] \label{prop:vlogtx-vtx}
Let $(X,T)$ be a topological dynamical system and $x\in X$.
Then $\vlog_T(x)$ is contained in the closed convex hull of $V_T(x)$. 
\end{prop}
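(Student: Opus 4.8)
The plan is to exploit the summation-by-parts identity recorded just before Lemma~\ref{lem:log-mean-seq}, applied with the Dirac masses $x_k=\delta_{T^{k-1}x}$ read as elements of $M(X)$ (equivalently, tested against each $f\in C(X,\bbr)$). Writing $A_k(x)=\frac{1}{k}\sum_{i=1}^{k}\delta_{T^{i-1}x}$ for the $k$-th arithmetic empirical measure, the identity rewrites the logarithmic empirical measure as
\[
\frac{1}{H_n}\sum_{k=1}^{n}\frac{1}{k}\delta_{T^{k-1}x}=\frac{1}{H_n}A_n(x)+\frac{1}{H_n}\sum_{k=1}^{n-1}\frac{1}{k+1}A_k(x).
\]
First I would check that the coefficients $\tfrac{1}{H_n}$ and $\tfrac{1}{H_n(k+1)}$ are nonnegative and sum to $1$ — since $1+\sum_{k=1}^{n-1}\frac{1}{k+1}=H_n$ — so that the right-hand side is a genuine convex combination of the measures $A_1(x),\dots,A_n(x)$, each of which lies in $M(X)$.

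The key point is then that almost all of the mass of this convex combination sits on the high-index terms, and those are close to $V_T(x)$. From Lemma~\ref{lem:vtx} the set $V_T(x)$ is the closed, nonempty set of weak$^*$ accumulation points of the sequence $\bigl(A_k(x)\bigr)$; by compactness of $M(X)$ this forces $\rho\bigl(A_k(x),V_T(x)\bigr)\to 0$. Fixing $\eps>0$, I would choose $N$ so that $A_k(x)$ lies in the $\eps$-neighbourhood of $V_T(x)$ for all $k\ge N$. Since the total weight carried by the indices $k<N$ equals $\frac{1}{H_n}\sum_{k=1}^{N-1}\frac{1}{k+1}\le\frac{H_N-1}{H_n}$, this weight tends to $0$ as $n\to\infty$ for fixed $N$.

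I would then combine the two observations using the convexity of the metric $\rho$, namely $\rho\bigl(\sum_i c_i\mu_i,\sum_i c_i\nu_i\bigr)\le\sum_i c_i\,\rho(\mu_i,\nu_i)$, which is immediate from its definition. Renormalising the high-index part to a probability vector and replacing each high-index $A_k(x)$ by a nearby element of $V_T(x)$ produces an element of the closed convex hull $\overline{\operatorname{conv}}\,V_T(x)$ within distance $\eps$ of that average; meanwhile the boundedness of $\rho$ together with the vanishing low-index weight controls the error from discarding the terms with $k<N$. Passing to the limit along a subsequence $n_i\nearrow\infty$ realising a given $\mu\in\vlog_T(x)$ yields $\rho\bigl(\mu,\overline{\operatorname{conv}}\,V_T(x)\bigr)\le C\eps$, and since $\eps>0$ is arbitrary and the convex hull is closed, $\mu\in\overline{\operatorname{conv}}\,V_T(x)$.

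The main obstacle — indeed essentially the only delicate point — is the bookkeeping that separates the two scales. The low-index Cesàro averages need not lie anywhere near $V_T(x)$, so the estimate must rely on the fact that their aggregate weight is $O(1/H_n)=O(1/\log n)$, while the high-index averages are simultaneously uniformly $\eps$-close to $V_T(x)$. Keeping the normalisations straight when passing the high-index part to a probability vector, and invoking convexity of $\rho$ rather than arguing integrand-by-integrand against the $f_j$, is what keeps the final estimate clean.
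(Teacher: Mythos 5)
Your proof is correct and follows exactly the route the paper indicates: the paper does not prove this proposition itself but cites \cite[Proposition 2.1]{GKL18}, remarking that it is ``a consequence of the summation by parts trick,'' and your argument is precisely that trick fleshed out --- the identity expressing the logarithmic empirical measure as a convex combination of the Ces\`aro averages $A_1(x),\dots,A_n(x)$, the fact that $\rho\bigl(A_k(x),V_T(x)\bigr)\to 0$ by compactness of $M(X)$, and the vanishing $O(1/H_n)$ weight of the low-index terms. The bookkeeping you describe (convexity of $\rho$, renormalising the high-index block, then letting $n_i\nearrow\infty$ and $\eps\to 0$) is sound and complete.
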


\begin{rem}
Note that the sets $V_T(x)$ and $\vlog_T(x)$ can be disjoint, see \cite[Proposition 12.13]{GKL18}.
\end{rem}

\begin{prop}\label{prop:ergodic-log-mean}
Let $(X,T)$ be a topological dynamical system and $\mu\in M_T(X)$. Then 
the following assertions are equivalent:
\begin{enumerate}
    \item $\mu$ is ergodic;
    \item for any $f\in L^1(X,\mu)$, we have $\frac{1}{H_n} \sum_{k=1}^n \frac{1}{k} f(T^{k-1}x)\to \int f\dd \mu$ as $n\to\infty$ for $\mu$-a.e. $x\in X$;
    \item for any  $f\in L^1(X,\mu)$, the sequence of $L^1$-functions $\frac{1}{H_n} \sum_{k=1}^n \frac{1}{k} f\circ T^{k-1}$ converges to the constant function $\int f \dd\mu$ as $n\to\infty$ in $L^1(X,\mu)$.
\end{enumerate} 
\end{prop}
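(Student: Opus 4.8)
The plan is to anchor everything on the classical pointwise (Birkhoff) and mean ergodic theorems for the Cesàro averages and then transfer each statement to the harmonic averages through the summation-by-parts identity recorded just before Lemma~\ref{lem:log-mean-seq}. Write $A_n(f)(x)=\frac{1}{n}\sum_{k=1}^n f(T^{k-1}x)$ and $S_n(f)(x)=\frac{1}{H_n}\sum_{k=1}^n \frac{1}{k}f(T^{k-1}x)$ for the two kinds of averages. The identity reads
\[
S_n(f) = \frac{1}{H_n}A_n(f) + \frac{1}{H_n}\sum_{k=1}^{n-1}\frac{1}{k+1}A_k(f),
\]
and the coefficients $w_{n,n}=\frac{1}{H_n}$ and $w_{n,k}=\frac{1}{H_n(k+1)}$ (for $1\le k\le n-1$) are nonnegative, sum to exactly $1$ (since $\sum_{k=1}^{n-1}\frac{1}{k+1}=H_n-1$), and satisfy $w_{n,k}\to0$ as $n\to\infty$ for each fixed $k$. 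Thus $(w_{n,k})$ is a regular (Toeplitz) summation matrix, so whenever the sequence $A_k(f)$ converges (in $\bbr$, or in the norm of $L^1(X,\mu)$) to a limit $L$, the same holds for $S_n(f)=\sum_k w_{n,k}A_k(f)$, with the same limit. This observation is the engine of both forward implications and sidesteps the boundedness hypothesis of Lemma~\ref{lem:log-mean-seq}.

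For $(1)\Rightarrow(2)$ I would assume $\mu$ ergodic, fix $f\in L^1(X,\mu)$, and invoke Birkhoff's pointwise ergodic theorem to get $A_n(f)(x)\to\int f\dd\mu$ for $\mu$-a.e.\ $x$. For each such $x$ the real sequence $A_k(f)(x)$ converges, so the Toeplitz observation immediately gives $S_n(f)(x)\to\int f\dd\mu$, which is exactly~(2). For $(1)\Rightarrow(3)$ I would instead start from the mean ergodic theorem, which gives $A_n(f)\to\int f\dd\mu$ in $L^1(X,\mu)$. Since the weights sum to $1$,
\[
S_n(f)-\int f\dd\mu = \frac{1}{H_n}\Bigl(A_n(f)-\int f\dd\mu\Bigr) + \frac{1}{H_n}\sum_{k=1}^{n-1}\frac{1}{k+1}\Bigl(A_k(f)-\int f\dd\mu\Bigr),
\]
and taking $L^1$-norms reduces matters to the scalar null sequence $\varepsilon_k:=\bigl\Vert A_k(f)-\int f\dd\mu\bigr\Vert_{L^1}\to0$; applying the same Toeplitz averaging to $(\varepsilon_k)$ yields $\bigl\Vert S_n(f)-\int f\dd\mu\bigr\Vert_{L^1}\to0$, i.e.\ (3).

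For the converses $(2)\Rightarrow(1)$ and $(3)\Rightarrow(1)$ I would test against indicators of invariant sets. Let $B\in\mathcal{B}(X)$ satisfy $T^{-1}B=B$ and put $f=\mathbf 1_B$. Invariance gives $\mathbf 1_B(T^{k-1}x)=\mathbf 1_B(x)$ for every $k$ and every $x$, whence $S_n(f)\equiv\mathbf 1_B$ for all $n$. Under (2) this $n$-constant sequence must converge $\mu$-a.e.\ to $\int f\dd\mu=\mu(B)$, so $\mathbf 1_B=\mu(B)$ $\mu$-a.e.; under (3) the same conclusion follows from $\Vert\mathbf 1_B-\mu(B)\Vert_{L^1}=0$. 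In either case $\mu(B)\in\{0,1\}$, so $\mu$ is ergodic.

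The main obstacle is the forward implication $(1)\Rightarrow(2)$ for \emph{unbounded} $f\in L^1(X,\mu)$: here Lemma~\ref{lem:log-mean-seq} is not directly available, since $k\mapsto f(T^{k-1}x)$ need not be bounded and one cannot squeeze the harmonic averages between the $\liminf$ and $\limsup$ of the Cesàro averages. The point to get right is therefore that mere \emph{convergence} of the Cesàro averages — not their boundedness — already propagates to the harmonic averages via the regularity of the Toeplitz matrix $(w_{n,k})$. Should one prefer to stay closer to Lemma~\ref{lem:log-mean-seq}, an alternative is to establish (2) first for bounded $f$ and then remove boundedness by truncation together with a maximal inequality, using that the harmonic maximal function $\sup_n S_n(|f|)$ is dominated by the Cesàro maximal function $\sup_k A_k(|f|)$, because each $S_n(|f|)$ is a convex combination of $A_1(|f|),\dots,A_n(|f|)$.
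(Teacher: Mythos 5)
Your proof is correct, and while it shares the paper's overall skeleton (classical ergodic theorems for the Ces\`aro averages, transfer to harmonic averages via summation by parts, converses by testing indicators of invariant sets), it diverges in two substantive ways. First, where the paper's proof of (1)$\Rightarrow$(2) simply invokes Lemma~\ref{lem:log-mean-seq}, you replace that lemma by the regularity (Silverman--Toeplitz) property of the weights $w_{n,k}$. This is a genuine gain in rigor: Lemma~\ref{lem:log-mean-seq} is stated for \emph{bounded} sequences, whereas for general $f\in L^1(X,\mu)$ the sequence $k\mapsto f(T^{k-1}x)$ need not be bounded for $\mu$-a.e.\ $x$, so the paper's citation is, strictly speaking, not justified as written; the conclusion survives because the squeeze inequalities hold for arbitrary real sequences whose Ces\`aro averages converge --- which is exactly your Toeplitz argument --- but your write-up makes this explicit rather than leaving it implicit. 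Second, the paper proves the cycle (1)$\Rightarrow$(2)$\Rightarrow$(3)$\Rightarrow$(1), obtaining (2)$\Rightarrow$(3) from dominated convergence on $L^\infty(X,\mu)$ plus density of $L^\infty(X,\mu)$ in $L^1(X,\mu)$ (a step that tacitly uses that the harmonic averaging operators are $L^1$-contractions); you instead prove (1)$\Rightarrow$(3) directly from the $L^1$ mean ergodic theorem together with the scalar Toeplitz estimate applied to $\varepsilon_k=\Vert A_k(f)-\int f\dd\mu\Vert_{L^1}$, and close the equivalence with separate implications (2)$\Rightarrow$(1) and (3)$\Rightarrow$(1), both identical in spirit to the paper's (3)$\Rightarrow$(1). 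What the paper's route buys is brevity and reuse of its stated lemma; what yours buys is a self-contained treatment of unbounded $f$ and independence from the density argument. Your closing alternative (truncation plus a maximal inequality) is also viable but unnecessary given the Toeplitz observation.
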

\begin{proof}
(1)$\Rightarrow$(2). 
Fix $f\in L^1(X,\mu)$. 
By the Birkhoff ergodic theorem, see e.g.\ \cite[Theorem 2.30]{EW11}, we have $\frac{1}{n} \sum_{k=1}^n f(T^{k-1}x)\to \int f\dd\mu$ as $n\to\infty$ for $\mu$-a.e. $x\in X$.
It follows from Lemma~\ref{lem:log-mean-seq} that 
$\frac{1}{H_n} \sum_{k=1}^n \frac{1}{k} f(T^{k-1}x)\to \int f\dd \mu$ as $n\to\infty$ for $\mu$-a.e. $x\in X$.

(2)$\Rightarrow$(3). By the Lebesgue-dominated convergence theorem,
(3) holds for any $f\in L^\infty(X,\mu)$.
As $L^\infty(X,\mu)$ is dense in $L^1(X,\mu)$, (3) holds for all $f\in L^1(X,\mu)$.

(3)$\Rightarrow$(1).
Let $B\in\mathcal{B}(X)$ with $T^{-1}B=B$.
Let $f=\mathbf{1}_B$. Then $f\in L^1(X,\mu)$ and $f\circ T=f$. Furthermore, $\frac{1}{H_n} \sum_{k=1}^n \frac{1}{k} f(T^{k-1}x) = f(x)$ for any $n\in\bbn$ and $x\in X$.
Since $\Vert f -\int f\dd\mu\Vert_1=0$, we see that $f\equiv 1$ or $f\equiv 0$ up to a $\mu$-null set. It follows that $\mu(B)=1$ or $\mu(B)=0$.
This shows that $\mu$ is ergodic.
\end{proof}

The following result is a special case of \cite[Proposition 5.3.1]{P24}. Here we provide a direct proof for the sake of completeness.

\begin{prop}\label{prop:ergodic-vlog}
Let $(X,T)$ be a topological dynamical system and $\mu\in M_T(X)$. 
Then $\mu$ is ergodic if and only if for $\mu$-a.e.\ $x\in X$ we have $\vlog_T(x)=\{\mu\}$.
\end{prop}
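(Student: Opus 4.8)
The plan is to prove the two implications separately, using the already-established Proposition~\ref{prop:ergodic-log-mean} for the forward direction and the ergodic decomposition together with the passage from Cesàro to logarithmic genericity for the reverse direction.

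For the forward implication, suppose $\mu$ is ergodic. I would apply the implication (1)$\Rightarrow$(2) of Proposition~\ref{prop:ergodic-log-mean} to each function in the fixed countable family $\{f_j\colon j\ge 1\}$ whose linear span is dense in $C(X,\bbr)$. Since each $f_j\in L^1(X,\mu)$, for each $j$ there is a full-measure set on which $\frac{1}{H_n}\sum_{k=1}^n\frac{1}{k}f_j(T^{k-1}x)\to\int f_j\dd\mu$; intersecting these countably many sets yields a single $\mu$-full set $X_0$ on which the convergence holds simultaneously for all $f_j$. Because the logarithmic empirical measures are probability measures, hence uniformly bounded against $\Vert\cdot\Vert_\infty$, convergence along the dense family transfers to every $f\in C(X,\bbr)$, so for each $x\in X_0$ the logarithmic empirical measures converge weak$^*$ to $\mu$. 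In terms of the metric $\rho$ this is exactly $\vlog_T(x)=\{\mu\}$ for all $x\in X_0$.

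For the reverse implication I would argue through the ergodic decomposition $\mu=\int_X\mu_x\dd\mu(x)$, where $x\mapsto\mu_x$ is measurable and $\mu_x$ is ergodic for $\mu$-a.e.\ $x$. By the Birkhoff ergodic theorem applied along the family $\{f_j\}$, the Cesàro averages converge $\mu$-a.e.\ to the conditional expectation on the invariant $\sigma$-algebra, that is, $\mu$-a.e.\ point is generic for its own ergodic component, $V_T(x)=\{\mu_x\}$. By the observation preceding Proposition~\ref{prop:vlogtx-vtx} (which rests on Lemma~\ref{lem:log-mean-seq}), genericity upgrades to logarithmic genericity, so $\vlog_T(x)=\{\mu_x\}$ for $\mu$-a.e.\ $x$. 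Combining this with the hypothesis $\vlog_T(x)=\{\mu\}$ for $\mu$-a.e.\ $x$ forces $\mu_x=\mu$ on a $\mu$-full set; as $\mu_x$ is ergodic there, $\mu$ itself is ergodic.

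I expect the reverse direction to be the main obstacle. One cannot test the hypothesis directly against the indicator of an invariant set witnessing non-ergodicity, since $\vlog_T(x)=\{\mu\}$ only controls integrals of continuous functions while such an indicator is discontinuous. Routing through the ergodic decomposition circumvents this difficulty by replacing the discontinuous test function with the $\mu$-a.e.\ identification of the component $\mu_x$, a fact detected using only continuous functions. The remaining subtlety is the routine bookkeeping needed to pass from convergence on the countable dense family to full weak$^*$ convergence, which is harmless because all measures involved are probability measures.
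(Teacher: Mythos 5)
Your proof is correct, but your reverse direction takes a genuinely different route from the paper's. For the forward direction, you and the paper do essentially the same thing in different packaging: the paper quotes Lemma~\ref{lem:ergodic-generic-points} to get $V_T(x)=\{\mu\}$ almost everywhere and then applies Proposition~\ref{prop:vlogtx-vtx} (the convex-hull containment), whereas you apply Proposition~\ref{prop:ergodic-log-mean}, implication (1)$\Rightarrow$(2), to the countable dense family $\{f_j\}$ and run the standard density argument; both hinge on the same sandwich Lemma~\ref{lem:log-mean-seq}. The real divergence is in the reverse direction. The paper does \emph{not} use the ergodic decomposition: from $\vlog_T(x)=\{\mu\}$ a.e.\ it gets pointwise convergence of the logarithmic averages of each continuous $f$, upgrades this to convergence in $L^1(X,\mu)$ by dominated convergence, extends to all $f\in L^1(X,\mu)$ by density of $C(X,\bbr)$ in $L^1$ (using $T$-invariance of $\mu$, so that logarithmic averaging is an $L^1$-contraction), and then invokes Proposition~\ref{prop:ergodic-log-mean}, implication (3)$\Rightarrow$(1), whose proof tests the $L^1$-convergence against the indicator of an invariant set. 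So the ``obstacle'' you flag --- that indicators of invariant sets are discontinuous and cannot be tested directly against weak$^*$ convergence --- is circumvented in the paper not by the ergodic decomposition but by this $L^1$-approximation step, which is lighter machinery. Your route, via the decomposition $\mu=\int\mu_x\dd\mu(x)$, a.e.\ genericity of points for their ergodic components, and the upgrade from $V_T(x)=\{\mu_x\}$ to $\vlog_T(x)=\{\mu_x\}$, is also valid and arguably more transparent conceptually (it says exactly why the hypothesis forces ergodicity: every ergodic component is forced to coincide with $\mu$), but it imports the ergodic decomposition theorem, a tool the paper never needs.
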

\begin{proof}
($\Rightarrow$) Assume that $\mu$ is ergodic. 
By Lemma~\ref{lem:ergodic-generic-points}, it holds $V_T(x)=\{\mu\}$ for $\mu$-a.e.\ $x\in X$.  
Using Proposition~\ref{prop:vlogtx-vtx} we see that $\vlog_T(x)=\{\mu\}$ for $\mu$-a.e.\ $x\in X$. 

($\Leftarrow$) There exists a Borel subset $X_0$ of $X$ with $\mu(X_0)=1$ such that $\vlog_T(x)=\{\mu\}$ for all $x\in X_0$.
By the definition of weak$^*$-topology on $M(X)$, for any $f\in C(X,\bbr)$ and $x\in X_0$, we have 
$\frac{1}{H_n} \sum_{k=1}^n \frac{1}{k} f(T^{k-1}x)\to \int f\dd \mu$ as $n\to\infty$.
By the Lebesgue dominated convergence theorem, for every $f\in C(X,\bbr)$ we have
\[
  \biggl\Vert \frac{1}{H_n} \sum_{k=1}^n \frac{1}{k} f\circ T^{k-1} -\int f\dd \mu  \biggr\Vert_1 \to 0, \text{ as }n\to\infty.
\]
Since $C(X,\bbr)$ is dense in $L^1(X,\mu)$, for any $f\in L^1(X,\mu)$ we have 
\[
  \biggl\Vert \frac{1}{H_n} \sum_{k=1}^n \frac{1}{k} f\circ T^{k-1} -\int f\dd \mu  \biggr\Vert_1 \to 0, \text{ as }n\to\infty.
\]
Now by Proposition~\ref{prop:ergodic-log-mean}, $\mu$ is ergodic.
\end{proof}

\begin{prop}\label{prop:uniqlue-ergodic-vlogt}
Let $(X,T)$ be a topological dynamical system. Then the following assertions are equivalent:
\begin{enumerate}
    \item $(X,T)$ is uniquely ergodic;
    \item there exists  $\mu\in M_T(X)$ such that 
    for every $x\in X$ we have $\vlog_T(x)=\{\mu\}$;
    \item there exists  $\mu\in M_T(X)$ such that 
    for every $x\in X$ we have $\vtlog_T(x)=\{\mu\}$.
\end{enumerate}
\end{prop}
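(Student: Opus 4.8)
The plan is to mirror the proof of the Cesàro analogue, Proposition~\ref{prop:unique-erg-vtx}, by establishing the cycle of implications (1)$\Rightarrow$(3)$\Rightarrow$(2)$\Rightarrow$(1). Two of these three implications should be essentially formal consequences of Lemma~\ref{lem:vlogT-x}, while the remaining one, (2)$\Rightarrow$(1), carries the real content and rests on Proposition~\ref{prop:ergodic-vlog}.

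First I would dispatch (1)$\Rightarrow$(3). If $(X,T)$ is uniquely ergodic, then $M_T(X)=\{\mu\}$ for its unique invariant measure $\mu$. By Lemma~\ref{lem:vlogT-x}, for every $x\in X$ we have the chain $\emptyset\neq\vlog_T(x)\subset\vtlog_T(x)\subset M_T(X)=\{\mu\}$, which forces $\vtlog_T(x)=\{\mu\}$. The implication (3)$\Rightarrow$(2) is equally immediate: again by Lemma~\ref{lem:vlogT-x} the set $\vlog_T(x)$ is a nonempty subset of $\vtlog_T(x)=\{\mu\}$, hence equals $\{\mu\}$.

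The heart of the argument is (2)$\Rightarrow$(1). Assuming some $\mu\in M_T(X)$ satisfies $\vlog_T(x)=\{\mu\}$ for every $x\in X$, I would show $M^{\mathrm{erg}}_T(X)=\{\mu\}$, which yields unique ergodicity through the identification, recalled earlier in the section, of ergodic measures with the extreme points of $M_T(X)$. Since $M_T(X)$ is nonempty, compact, and convex, it has extreme points, so $M^{\mathrm{erg}}_T(X)\neq\emptyset$; pick any ergodic $\nu$. By Proposition~\ref{prop:ergodic-vlog}, the set $\{x\in X : \vlog_T(x)=\{\nu\}\}$ has full $\nu$-measure, hence is nonempty, so there exists a point $x$ with $\{\nu\}=\vlog_T(x)=\{\mu\}$, giving $\nu=\mu$. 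As $\nu$ was an arbitrary ergodic measure, $M^{\mathrm{erg}}_T(X)=\{\mu\}$ and the system is uniquely ergodic.

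I expect no serious obstacle: the only substantive input is Proposition~\ref{prop:ergodic-vlog}, which converts the hypothesis on logarithmically generic points into information about ergodic measures, and the rest is bookkeeping with the inclusions of Lemma~\ref{lem:vlogT-x} together with the extreme-point characterization of unique ergodicity. If one preferred to avoid invoking the existence of an ergodic measure, an alternative for (2)$\Rightarrow$(1) would argue by contradiction, applying the ergodic decomposition to a hypothetical second invariant measure and using Proposition~\ref{prop:ergodic-vlog} on one of its ergodic components; but the extreme-point route seems cleaner and shorter.
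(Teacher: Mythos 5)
Your proposal is correct and follows essentially the same route as the paper's proof: the implications (1)$\Rightarrow$(3)$\Rightarrow$(2) are dispatched exactly as in the paper via the inclusions of Lemma~\ref{lem:vlogT-x}, and for (2)$\Rightarrow$(1) the paper likewise fixes an arbitrary $\nu\in M_T^{\mathrm{erg}}(X)$, uses Proposition~\ref{prop:ergodic-vlog} to produce a point $x$ with $\vlog_T(x)=\{\nu\}$, and concludes $\nu=\mu$, so that $M_T^{\mathrm{erg}}(X)$ is a singleton and unique ergodicity follows from the extreme-point characterization recalled earlier in the paper.
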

\begin{proof}
(1)$\Rightarrow$(3). As $(X,T)$ is uniquely ergodic,
there exists $\mu\in M(X)$ such that $M_T(X)=\{\mu\}$.
It follows from Lemma~\ref{lem:vlogT-x} that  $\vtlog_T(x)=\{\mu\}$ for every $x\in X$.

(3)$\Rightarrow$(2). It is a consequence of 
$\emptyset\neq \vlog_T(x)\subset \vtlog_T(x)$, see Lemma \ref{lem:vlogT-x}.

(2)$\Rightarrow$(1). Fix $\nu\in M_T^{\mathrm{erg}}(X)$. By Proposition~\ref{prop:ergodic-vlog}, there exists $x\in X$ such that $\vlog_T(x)=\{\nu\}$.
Using (2), one has $\mu=\nu$. This shows that $M_T^{\mathrm{erg}}(X)$ is a singleton.
Then $(X,T)$ is uniquely ergodic.
\end{proof}

Now we have the following characterization of unique ergodicity by logarithmic mean convergence. 

\begin{thm} \label{thm:unique-erg-log-mean}
Let $(X,T)$ be a topological dynamical system. Then the following assertions are equivalent:
\begin{enumerate}
    \item $(X,T)$ is uniquely ergodic;
    \item there exists  $\mu\in M_T(X)$ such that 
     for any $f\in C(X,\bbr)$ and $x\in X$ one has 
    \[
        \lim_{n-m\to\infty }
        \frac{1}{H_{n-m}} \sum_{k=m+1}^n \frac{1}{k-m}f(T^{k-1}x)= \int f\dd\mu;
    \]
    \item there exists  $\mu\in M_T(X)$ such that 
     for any $f\in C(X,\bbr)$ and $x\in X$ one has 
    \[
        \lim_{n\to\infty} \frac{1}{H_n} \sum_{k=1}^n \frac{1}{k}f(T^{k-1}x)= \int f\dd\mu;
    \]
    \item for any $f\in C(X,\bbr)$ the functions $\frac{1}{H_n} \sum_{k=1}^n \frac{1}{k} f\circ T^{k-1}$  converge pointwise to a constant function as $n\to\infty$;
    \item  for any $f\in C(X,\bbr)$ the functions
    $\frac{1}{H_{n-m}} \sum_{k=m+1}^n \frac{1}{k-m}f\circ T^{k-1}$ converge pointwise to a constant function as $n-m\to\infty$;
    \item for any $f\in C(X,\bbr)$ the functions $\frac{1}{H_n} \sum_{k=1}^n \frac{1}{k} f\circ T^{k-1}$ uniformly converge to a constant function as $n\to\infty$;
    \item  for any $f\in C(X,\bbr)$ the functions
    $\frac{1}{H_{n-m}} \sum_{k=m+1}^n \frac{1}{k-m} f\circ T^{k-1}$ converge uniformly to a constant function as $n-m\to\infty$.
\end{enumerate}
\end{thm}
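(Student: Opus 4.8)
The plan is to treat this as the logarithmic counterpart of Theorem~\ref{thm:unique-erg}, organizing the seven assertions around (1) as a hub and reducing the two measure-level statements to the characterization of unique ergodicity through $\vlog_T(x)$ and $\vtlog_T(x)$ in Proposition~\ref{prop:uniqlue-ergodic-vlogt}. First I would record the purely formal implications: uniform convergence trivially gives pointwise convergence, so $(7)\Rightarrow(5)$ and $(6)\Rightarrow(4)$; setting $m=0$ turns a gapped average into a plain one, so $(5)\Rightarrow(4)$; and $(3)\Rightarrow(4)$ is immediate since $\int f\dd\mu$ is constant in $x$.

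Next comes the dictionary between measures and functionals. By compactness of $M(X)$, the statement $\vlog_T(x)=\{\mu\}$ is equivalent to weak$^*$ convergence of $\frac1{H_n}\sum_{k=1}^n\frac1k\delta_{T^{k-1}x}$ to $\mu$, which by definition of the weak$^*$ topology means $\frac1{H_n}\sum_{k=1}^n\frac1k f(T^{k-1}x)\to\int f\dd\mu$ for every $f\in C(X,\bbr)$; the same remark applies verbatim to $\vtlog_T(x)$ and the gapped averages, where ``$n-m\to\infty$'' is exactly the assertion that every subsequential limit of the gapped empirical measures equals $\mu$. With this dictionary, assertion (3) is precisely condition (2) of Proposition~\ref{prop:uniqlue-ergodic-vlogt} and assertion (2) is precisely its condition (3), so $(1)\Leftrightarrow(2)\Leftrightarrow(3)$ follows at once. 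For $(4)\Rightarrow(1)$ I would apply the Riesz representation theorem: if for each $f$ the averages converge pointwise to a constant $c_f$, then $f\mapsto c_f$ is a positive linear functional with $c_1=1$, hence $c_f=\int f\dd\mu$ for some $\mu\in M(X)$; the dictionary then gives $\vlog_T(x)=\{\mu\}$ for every $x$, Lemma~\ref{lem:vlogT-x} forces $\mu\in M_T(X)$, and Proposition~\ref{prop:uniqlue-ergodic-vlogt} yields unique ergodicity.

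It remains to produce the uniform statements, and this is the main obstacle: I would prove $(1)\Rightarrow(6)$ by transferring \emph{uniform} Ces\`aro convergence (available from (1) via Theorem~\ref{thm:unique-erg}(4)) to the harmonic averages. Writing $x_k=f(T^{k-1}x)$ and $A_k(x)=\frac1k\sum_{i=1}^k x_i$, the summation-by-parts identity displayed before Lemma~\ref{lem:log-mean-seq} expresses the harmonic average as
\[
\frac1{H_n}A_n(x)+\frac1{H_n}\sum_{k=1}^{n-1}\frac1{k+1}A_k(x).
\]
The first term tends to $0$ uniformly in $x$ since the numerator is bounded and $H_n\to\infty$. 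As $A_k(x)\to\int f\dd\mu=:c$ uniformly in $x$, I would write $A_k=c+\eta_k$ with $\sup_x|\eta_k(x)|\to0$ and combine $\frac1{H_n}\sum_{k=1}^{n-1}\frac1{k+1}\to1$ with the Toeplitz (regular weighting) lemma applied to the null sequence $\sup_x|\eta_k|$, concluding that the second term converges to $c$ uniformly in $x$. This is the only genuinely analytic point.

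Finally I would close the equivalence. The implication $(6)\Rightarrow(7)$ is the shift reduction: the gapped harmonic average of $x$ of length $n-m$ equals the plain harmonic average of $T^mx$ of the same length, so uniformity over all starting points upgrades $(6)$ to $(7)$ directly. Collecting $(1)\Rightarrow(6)\Rightarrow(7)\Rightarrow(5)\Rightarrow(4)\Rightarrow(1)$ together with $(1)\Leftrightarrow(2)\Leftrightarrow(3)$ (and the redundant $(6)\Rightarrow(4)$) establishes that all seven assertions are equivalent; everything beyond the uniform Toeplitz estimate is bookkeeping on top of Proposition~\ref{prop:uniqlue-ergodic-vlogt}.
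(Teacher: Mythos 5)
Your proposal is correct and complete: the cycle $(1)\Rightarrow(6)\Rightarrow(7)\Rightarrow(5)\Rightarrow(4)\Rightarrow(1)$ together with $(1)\Leftrightarrow(2)\Leftrightarrow(3)$ does cover all seven assertions, and each step holds up. The skeleton matches the paper's (the weak$^*$ dictionary reducing $(1)\Leftrightarrow(2)\Leftrightarrow(3)$ to Proposition~\ref{prop:uniqlue-ergodic-vlogt}, and the shift reduction for $(6)\Rightarrow(7)$ are identical), but your two nontrivial implications are handled by genuinely different arguments. For the uniformity step, the paper proves $(3)\Rightarrow(6)$ by a soft compactness contradiction: if uniform convergence fails for some $f$, pick witnesses $x_n$, form the harmonic empirical measures $\nu_n$, extract a weak$^*$ limit, which is $T$-invariant by Lemma~\ref{lem:vlogT-seq} yet integrates $f$ differently than $\mu$ --- contradicting unique ergodicity. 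You instead prove $(1)\Rightarrow(6)$ quantitatively, feeding Oxtoby's uniform Ces\`aro convergence (Theorem~\ref{thm:unique-erg}(4)) through the summation-by-parts identity and a uniform Toeplitz estimate; in effect you establish a uniform version of Lemma~\ref{lem:log-mean-seq} (uniform Ces\`aro convergence of a bounded sequence of functions implies uniform harmonic convergence to the same limit), which is of independent interest but makes the logarithmic theorem depend on the classical Oxtoby theorem, whereas the paper's route stays entirely within the harmonic-measure machinery. Similarly, for $(4)\Rightarrow(1)$ the paper integrates the pointwise limit $c_f$ against an arbitrary invariant measure via dominated convergence and invariance, then invokes separation of measures by continuous functions; you instead realize $f\mapsto c_f$ as a measure via the Riesz representation theorem and cycle back through Lemma~\ref{lem:vlogT-x} and Proposition~\ref{prop:uniqlue-ergodic-vlogt}. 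Both mechanisms are sound; the paper's is marginally shorter, yours is more constructive and isolates a reusable summability fact.
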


\begin{proof}
Combining Proposition~\ref{prop:uniqlue-ergodic-vlogt} with the definition of weak$^*$-topology on $M(X)$ and definitions of sets $\vlog_T(x)$ and $\vtlog_T(x)$, we obtain (1)$\Leftrightarrow$(2)$\Leftrightarrow$(3).  It is also clear that (2)$\Rightarrow$(5)$\Rightarrow$ (4) and (7)$\Rightarrow$(6)$\Rightarrow$(4). 

(4)$\Rightarrow$(1).
Fix $f\in C(X,\bbr)$. Let $c_f$ be the constant function such that $\frac{1}{H_n} \sum_{k=1}^n \frac{1}{k} f\circ T^{k-1}$  converges pointwise to $c_f$ as $n\to\infty$.
By the Lebesgue dominated convergence theorem for every $\mu \in M_T(X)$ and $f\in C(X,\bbr)$ one has
\[
    \int f\dd\mu = \int \lim_{n\to\infty} \frac{1}{H_n} \sum_{k=1}^n \frac{1}{k}f(T^{k-1}x) \dd\mu=c_f.
\] 
Since continuous functions separate measures on $X$, this is possible only if $M_T(X)$ is a singleton.

(3)$\Rightarrow$(6).
If there exists $f\in C(X,\bbr)$ such that the convergence in (3) is not uniform, then there exists $\varepsilon>0$ such that for any $N_0\in\bbn$ there are $n>N_0$ and $x_n\in X$ such that 
\[
    \biggl|\frac{1}{H_n} \sum_{k=1}^n \frac{1}{k} f(T^{k-1}x_n)-\int f\dd\mu \biggr|\geq \varepsilon.
\]
Let $\nu_n= \frac{1}{H_n} \sum_{k=1}^n \frac{1}{k}\delta_{T^{k-1}x_n}$, so that $\bigl |\int f\dd\nu_n -\int f\dd\mu\bigr|\geq \varepsilon$.
By compactness of $M(X)$, there exists a sequence $n_i\nearrow \infty$ in $\bbn$ and $\nu\in M(X)$ such that $\nu_{n_i}\to\nu$ as $i\to\infty$.
Then $\bigl |\int f\dd\nu -\int f\dd\mu\bigr|\geq \varepsilon$.
Using Lemma~\ref{lem:vlogT-seq} we conclude that $\nu$ is an invariant measure. This contradicts the unique ergodicity of $(X,T)$.

(6)$\Rightarrow$(7). The proof is basically the same as the proof of (4)$\Rightarrow$(7) in Theorem~\ref{thm:unique-erg}.
\end{proof}

\section{Logarithmic mean equicontinuity}

In this section, we study logarithmic mean equicontinuity. We will use the characterization of unique ergodicity by logarithmic mean convergence to show that logarithmic mean equicontinuity is equivalent to mean equicontinuity. 
Moreover, we show that Weyl logarithmic mean equicontinuity is also equivalent to logarithmic mean equicontinuity. 

We first recall the definition of mean equicontinuity and introduce logarithmic mean equicontinuity.
\begin{defn}
We say that a topological dynamical system $(X,T)$ is \emph{mean equicontinuous} if for any $\eps>0$ there exists  $\delta>0$ such that for any $x,y\in X$ with $d(x,y)<\delta$ one has 
\[
    \limsup_{n\to\infty} \frac{1}{n}\sum_{k=1}^{n}
    d(T^{k-1}x,T^{k-1}y)<\eps.
\]
A topological dynamical system $(X,T)$  is \emph{logarithmically mean equicontinuous} if for any $\eps>0$ there exists  $\delta>0$ such that for any $x,y\in X$ with $d(x,y)<\delta$ one has 
\[
    \limsup_{n\to\infty} \frac{1}{H_n}\sum_{k=1}^{n}\frac{1}{k}d(T^{k-1}x,T^{k-1}y)<\eps.
\]
\end{defn}

\begin{rem}\label{rem:log-mean-eq}
It follows from Lemma~\ref{lem:log-mean-seq}, that if $(X,T)$ is mean equicontinuous, then it is also logarithmically mean equicontinuous.
\end{rem}

The following result is a special case of \cite[Proposition 5.2.13]{P24}.
We provide a proof for the sake of completeness.

\begin{lem}\label{lem:log-mean-equi-vlog}
Let $(X,T)$ be a topological dynamical system.
If $(X,T)$ is logarithmically mean equicontinuous,
then the map $\vlog_T\colon X\to 2^{M(X)}$ given by $x\mapsto \vlog_T(x)$, is continuous.
\end{lem}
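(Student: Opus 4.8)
The plan is to prove (uniform) continuity of $\vlog_T$ by estimating the Hausdorff distance $d_H(\vlog_T(x),\vlog_T(y))$ directly in terms of the logarithmic mean distance between the orbits of $x$ and $y$. First I would fix $\eps>0$ and invoke logarithmic mean equicontinuity to produce $\delta>0$ such that $d(x,y)<\delta$ forces
\[
\limsup_{n\to\infty}\frac{1}{H_n}\sum_{k=1}^{n}\frac{1}{k}d(T^{k-1}x,T^{k-1}y)<\eps.
\]
The whole point is then to show that this single scalar bound controls $d_H(\vlog_T(x),\vlog_T(y))$, so it suffices to take $x,y$ with $d(x,y)<\delta$ and verify $d_H(\vlog_T(x),\vlog_T(y))\le\eps$.

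The core step is a matching construction. Given $\mu\in\vlog_T(x)$, I would choose a witnessing sequence $n_i\nearrow\infty$ with $\frac{1}{H_{n_i}}\sum_{k=1}^{n_i}\frac{1}{k}\delta_{T^{k-1}x}\to\mu$. By compactness of $M(X)$, I pass to a further subsequence along which the corresponding logarithmic empirical measures of $y$ converge to some $\nu$; by definition $\nu\in\vlog_T(y)$. To bound $\rho(\mu,\nu)$ I exploit the explicit form of the metric $\rho$: for each defining test function $f_j$ the Lipschitz bound $|f_j(T^{k-1}x)-f_j(T^{k-1}y)|\le d(T^{k-1}x,T^{k-1}y)$ gives, after passing to the limit along the chosen subsequence,
\[
\biggl|\int f_j \dd\mu - \int f_j \dd\nu\biggr| \le \limsup_{n\to\infty}\frac{1}{H_n}\sum_{k=1}^{n}\frac{1}{k}d(T^{k-1}x,T^{k-1}y) <\eps.
\]
Since this estimate is uniform in $j$, summing against the weights $2^{-j}$ yields $\rho(\mu,\nu)<\eps$.

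Running this argument for every $\mu\in\vlog_T(x)$ shows that $\vlog_T(x)$ is contained in the $\eps$-neighbourhood of $\vlog_T(y)$, and the symmetric argument (swapping the roles of $x$ and $y$, and using the same $\limsup$ bound, which is symmetric in $x,y$) gives the reverse containment. Hence $d_H(\vlog_T(x),\vlog_T(y))\le\eps$. As $\eps$ was arbitrary and $\delta$ depends only on $\eps$, this proves $\vlog_T$ is (uniformly) continuous into $(2^{M(X)},d_H)$.

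The one point requiring care is the subsequence bookkeeping: I must ensure that the measure $\nu$ matched to $\mu$ genuinely lies in $\vlog_T(y)$, which is why I extract $\nu$ as a limit of the $y$-empirical measures along the \emph{same} indices $n_i$ that witness $\mu$ (refining to a convergent subsequence as needed). Once this is arranged, the inequality is a routine consequence of the Lipschitz property built into the $f_j$, so the substantive content lies entirely in translating the scalar logarithmic mean equicontinuity estimate into the Hausdorff metric through the explicit compatible metric $\rho$.
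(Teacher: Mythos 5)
Your proposal is correct and takes essentially the same approach as the paper: both arguments match each $\mu\in\vlog_T(x)$ with a $\nu\in\vlog_T(y)$ obtained as a limit of the $y$-empirical measures along (a refinement of) the subsequence witnessing $\mu$, bound $\rho(\mu,\nu)$ by the logarithmic mean distance via the Lipschitz test functions $f_j$ defining $\rho$, and conclude the Hausdorff estimate by symmetry. There is nothing to correct.
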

\begin{proof}
Fix a countable collection $\{f_j\colon j\geq 1\}$ in $C(X,\bbr)$ such that the linear subspace of $C(X,\bbr)$ spanned by  $\{f_j\colon j\geq 1\}$ is dense in $C(X,\bbr)$. Furthermore, assume that for each $j\geq 1$, 
$\Vert f_j\Vert_\infty \leq 1$ 
and $|f_j(x)-f_j(y)|\leq d(x,y)$ for all $x,y\in X$.
Then for any $j\in\bbn$, $x,y\in X$, and $n\in\bbn$ we have 
\begin{multline}\label{ineq:star}
\biggl|\int f_j \dd \biggl(\frac{1}{H_n}\sum_{k=1}^{n}\frac{1}{k}\delta_{T^{k-1}x}\biggr) -\int f_j \dd \biggl(\frac{1}{H_n}\sum_{k=1}^{n}\frac{1}{k}\delta_{T^{k-1}y}\biggr)\biggr|
\leq\\ \leq\frac{1}{H_n}\sum_{k=1}^{n} \frac{1}{k} |f_j(T^{k-1}x)-f_j(T^{k-1}y)|\leq \frac{1}{H_n}\sum_{k=1}^{n} \frac{1}{k} d(T^{k-1}x,T^{k-1}y).
\end{multline}
As $(X,T)$ is logarithmically mean equicontinuous, 
for any $\eps>0$ there exists  $\delta>0$ such that for any $x,y\in X$ with $d(x,y)<\delta$ one has 
\[
    \limsup_{n\to\infty} \frac{1}{H_n}\sum_{k=1}^{n}\frac{1}{k}d(T^{k-1}x,T^{k-1}y)<\eps.
\]
Fix $x,y\in X$ with $d(x,y)<\delta$. Let $\mu\in \vlog_T(x)$.
There exists a sequence $n_i\nearrow \infty $ such that 
   $\frac{1}{H_{n_i}}\sum_{k=1}^{n_i} \frac{1}{k}\delta_{T^{k-1}x} \to \mu$ as $i\to\infty$.
By the compactness of $M(X)$, without loss of generality, assume that  $\frac{1}{H_{n_i}}\sum_{k=1}^{n_i} \frac{1}{k}\delta_{T^{k-1}y} \to \nu$ as $i\to\infty$. 
Then using \eqref{ineq:star} we get
\begin{align*}
\rho(\mu,\nu) &= \lim_{i\to\infty} \rho\biggl( \frac{1}{H_{n_i}}\sum_{k=1}^{n_i} \frac{1}{k}\delta_{T^{k-1}x}, \frac{1}{H_{n_i}}\sum_{k=1}^{n_i} \frac{1}{k}\delta_{T^{k-1}x}\biggr)\\
&= \lim_{i\to\infty} \sum_{j\geq 1}\frac{1}{2^j} 
\biggl|\int f_j \dd \biggl(\frac{1}{H_{n_i}}\sum_{k=1}^{n_i}\frac{1}{k}\delta_{T^{k-1}x}\biggr) -\int f_j \dd \biggl(\frac{1}{H_{n_i}}\sum_{k=1}^{n_i}\frac{1}{k}\delta_{T^{k-1}y}\biggr)\biggr|\\
&\leq \lim_{n\to\infty} \sup \frac{1}{H_n}\sum_{k=1}^{n} \frac{1}{k} d(T^{k-1}x,T^{k-1}y)<\eps.
\end{align*}
This implies that $\rho\bigl(\mu, \vlog_T(y)\bigr)<\eps$.
By symmetry, one has $\rho\bigl(\nu, \vlog_T(x)\bigr)<\eps$ for any $\nu\in \vlog_T(y)$.
Then $\rho_H(\vlog_T(x),\vlog_T(y))<\eps$.
This shows that that the map $\vlog_T: X\to 2^{M(X)}$ given by $x\mapsto \vlog_T(x)$, is continuous.
\end{proof}

For a point $x\in X$, the orbit of $x$ is the set $\orb(x,T)=\{T^nx\colon n\geq 0\}$.
It is clear that for any $x\in X$ the closure of $\orb(x,T)$ is $T$-invariant, hence $(\overline{\orb(x,T)},T)$ forms a subsystem of $(X,T)$.

\begin{lem}\label{lem:log-mean-eq-orbit-unqiuley-ergodic}
If $(X,T)$ is logarithmically mean equicontinuous,
then for every $x\in X$, the topological dynamical system $(\overline{\orb(x,T)},T)$ is uniquely ergodic.
\end{lem}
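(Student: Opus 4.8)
The plan is to reduce to the subsystem $Y=\overline{\orb(x,T)}$ and identify all of its logarithmic empirical measures with a single invariant measure, after which unique ergodicity follows from Proposition~\ref{prop:uniqlue-ergodic-vlogt}. First I would apply the hypothesis directly to $(X,T)$: by Lemma~\ref{lem:log-mean-equi-vlog} the map $\vlog_T\colon X\to 2^{M(X)}$ is continuous, and restricting a continuous map to the subspace $Y$ keeps it continuous. Here I would record the bookkeeping observation that for $y\in Y$ every measure in $\vlog_T(y)$ is a weak$^*$ limit of empirical measures supported on the closed invariant set $Y$, hence is itself supported on $Y$; thus $\vlog_T(y)$ coincides with the corresponding set computed inside the subsystem $(Y,T)$ under the canonical identification $M(Y)\hookrightarrow M(X)$.

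Next I would exploit the shift-invariance of $\vlog_T$. The Remark following Lemma~\ref{lem:vlogT-x} gives $\vlog_T(T^n x)=\vlog_T(x)$ for all $n\ge 0$, so $\vlog_T$ is constant, with value $K:=\vlog_T(x)$, on the orbit $\orb(x,T)$. Since this orbit is dense in $Y$ and a continuous map into a metric space that is constant on a dense set is constant everywhere, I conclude that $\vlog_T(y)=K$ for every $y\in Y$.

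It remains to show that $K$ is a singleton, which is where the ergodic input enters. The set $M_T(Y)$ is nonempty (e.g.\ by Lemma~\ref{lem:vtx}) and compact convex, so it has an extreme point; pick an ergodic measure $\mu\in M^{\mathrm{erg}}_T(Y)$. Applying Proposition~\ref{prop:ergodic-vlog} to the system $(Y,T)$ yields $\vlog_T(y)=\{\mu\}$ for $\mu$-almost every $y\in Y$. Comparing this with the constant value established above forces $K=\{\mu\}$, and therefore $\vlog_T(y)=\{\mu\}$ for every $y\in Y$. By the implication (2)$\Rightarrow$(1) of Proposition~\ref{prop:uniqlue-ergodic-vlogt} applied to $(Y,T)$, the subsystem $(\overline{\orb(x,T)},T)$ is uniquely ergodic.

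The conceptual core of the argument is the tension between two facts about $\vlog_T$ on $Y$: continuity together with orbit-invariance forces it to be globally constant, while the existence of an ergodic measure pins that constant value down to a single point. I do not expect a serious obstacle; the only care needed is in the first paragraph, namely verifying that $\vlog_T(y)$ may be computed interchangeably in $X$ and in $Y$, so that Propositions~\ref{prop:ergodic-vlog} and~\ref{prop:uniqlue-ergodic-vlogt}, which are statements about the ambient system, may legitimately be applied to the subsystem $(Y,T)$.
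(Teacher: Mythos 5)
Your proof is correct and follows essentially the same route as the paper's: constancy of $\vlog_T$ on the orbit plus continuity (Lemma~\ref{lem:log-mean-equi-vlog}) forces constancy on the orbit closure, an ergodic measure via Proposition~\ref{prop:ergodic-vlog} pins down the constant value to a singleton, and Proposition~\ref{prop:uniqlue-ergodic-vlogt} concludes unique ergodicity. Your extra bookkeeping paragraph, checking that $\vlog_T(y)$ computed in $X$ agrees with the corresponding set computed in the subsystem $(Y,T)$, is a point the paper leaves implicit, and it is a welcome clarification rather than a deviation.
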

\begin{proof}
Fix $x\in X$. 
It is easy to see that the map $\vlog_T$ is constant on $\orb(x,T)$.
By Lemma~\ref{lem:log-mean-equi-vlog}, the map $\vlog_T\colon X\to 2^{M_T(X)}$ is also continuous, so $\vlog_T$ is constant on $\overline{\orb(x,T)}$.
Let $\mu\in M^{\mathrm{erg}}_T\Bigl(\overline{\orb(x,T)}\Bigr)$.
By Proposition~\ref{prop:ergodic-vlog}  
there exists  $y\in \overline{\orb(x,T)}$ such that $\vlog_T(y)=\{\mu\}$. 
Therefore $\vlog_T(z)=\{\mu\}$ for all $z\in \overline{\orb(x,T)}$. Using Proposition~\ref{prop:uniqlue-ergodic-vlogt} we see that $(\overline{\orb(x,T)},T)$ is uniquely ergodic.
\end{proof}

Now we are ready to prove the main result of this section. 
\begin{thm}\label{thm:log-meqn-eq-main-result}
If $(X,T)$ is logarithmically mean equicontinuous, 
then it is mean equicontinuous.
\end{thm}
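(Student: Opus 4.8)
The plan is to deduce mean equicontinuity from the already-established fact (Lemma~\ref{lem:log-mean-eq-orbit-unqiuley-ergodic}) that logarithmic mean equicontinuity forces every orbit closure to be uniquely ergodic, but applied to the \emph{product} system rather than to $(X,T)$ itself. The guiding idea is that a generic point makes the Ces\`aro average of a continuous function converge, and once a Ces\`aro average converges, the sandwich inequality of Lemma~\ref{lem:log-mean-seq} forces the logarithmic average to share the same value. This lets me transfer the bound coming from logarithmic mean equicontinuity (a bound on a logarithmic average) to the Ces\`aro average that appears in the definition of mean equicontinuity.

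First I would check that the product system $(X\times X, T\times T)$, equipped with the metric $D\bigl((x,y),(x',y')\bigr)=d(x,x')+d(y,y')$, is again logarithmically mean equicontinuous. This is a routine estimate: since
\[
D\bigl((T\times T)^{k-1}(x,y),(T\times T)^{k-1}(x',y')\bigr)=d(T^{k-1}x,T^{k-1}x')+d(T^{k-1}y,T^{k-1}y'),
\]
the logarithmic average in the product splits into the two logarithmic averages in $(X,T)$, so the $\delta$ corresponding to $\eps/2$ for $(X,T)$ yields a $\delta$ that works for $\eps$ in the product.

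Next I would apply Lemma~\ref{lem:log-mean-eq-orbit-unqiuley-ergodic} to $(X\times X, T\times T)$: for every pair $(x,y)$ the subsystem $(\overline{\orb((x,y),T\times T)},T\times T)$ is uniquely ergodic, with unique invariant measure $\lambda=\lambda_{x,y}$. By Lemma~\ref{lem:vtx} applied in the product, $\emptyset\neq V_{T\times T}(x,y)\subset M_{T\times T}(X\times X)=\{\lambda\}$, so $(x,y)$ is generic and the empirical measures $\frac1n\sum_{k=1}^n\delta_{(T\times T)^{k-1}(x,y)}$ converge weak$^*$ to $\lambda$. Evaluating against the continuous observable $d\colon X\times X\to\bbr$ shows that $\lim_{n\to\infty}\frac1n\sum_{k=1}^n d(T^{k-1}x,T^{k-1}y)$ exists and equals $\int d\dd\lambda$.

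Finally, since this Ces\`aro limit exists, Lemma~\ref{lem:log-mean-seq} applied to the bounded sequence $x_k=d(T^{k-1}x,T^{k-1}y)$ collapses its entire chain of inequalities to a single value, yielding
\[
\limsup_{n\to\infty}\frac1n\sum_{k=1}^n d(T^{k-1}x,T^{k-1}y)=\limsup_{n\to\infty}\frac{1}{H_n}\sum_{k=1}^n\frac1k\,d(T^{k-1}x,T^{k-1}y).
\]
Now fix $\eps>0$ and take the $\delta>0$ furnished by the logarithmic mean equicontinuity of $(X,T)$. For $d(x,y)<\delta$ the right-hand side is $<\eps$, hence so is the left-hand side, which is exactly mean equicontinuity. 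I expect no serious obstacle: the substantive work is already packaged in Lemma~\ref{lem:log-mean-eq-orbit-unqiuley-ergodic}, and the only point requiring care is that the whole argument must be run in the product system, where $d$ plays the role of the continuous observable whose average we control.
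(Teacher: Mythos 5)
Your proposal is correct and follows essentially the same route as the paper: pass to the product system, invoke Lemma~\ref{lem:log-mean-eq-orbit-unqiuley-ergodic} to get that every orbit closure $(\overline{\orb((x,y),T\times T)},T\times T)$ is uniquely ergodic, deduce that the Ces\`aro and logarithmic averages of $d$ along every pair have the same limit, and conclude that the $\delta$ furnished by logarithmic mean equicontinuity also witnesses mean equicontinuity. The only differences are cosmetic --- the paper equates the two averages by citing Theorems~\ref{thm:unique-erg}\,(3) and \ref{thm:unique-erg-log-mean}\,(3), whereas you use genericity via Lemma~\ref{lem:vtx} plus the sandwich Lemma~\ref{lem:log-mean-seq}; just note that your inclusion $V_{T\times T}(x,y)\subset M_{T\times T}(X\times X)=\{\lambda\}$ should refer to $M_{T\times T}\bigl(\overline{\orb((x,y),T\times T)}\bigr)=\{\lambda\}$, since it is the orbit closure, not the whole product, that is uniquely ergodic.
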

\begin{proof}
It is easy to see that if $(X,T)$ is logarithmically mean equicontinuous, 
then the product system $(X\times X,T\times X)$ is also logarithmically mean equicontinuous.
By Lemma~\ref{lem:log-mean-eq-orbit-unqiuley-ergodic}, for every $(x,y)\in X\times X$, the topological dynamical system $(\overline{\orb((x,y),T\times T)},T\times T)$ is uniquely ergodic.
Let $\mu_{(x,y)}$ be the unique measure on $(\overline{\orb((x,y),T\times T)},T\times T)$.
Since the metric $d$ is a continuous function on $X\times X$, 
we apply Theorems~\ref{thm:unique-erg}\,(3) and \ref{thm:unique-erg-log-mean}\,(3) to get 
\[ 
\lim_{n\to\infty} \frac{1}{n}\sum_{k=1}^n d(T^{k-1}x,T^{k-1}y) = \int d(x,y) \dd\mu_{(x,y)} =
\lim_{n\to\infty} \frac{1}{H_n}\sum_{k=1}^{n}\frac{1}{k}d(T^{k-1}x,T^{k-1}y) 
\]
for every $(x,y)\in X\times X$.
Then $(X,T)$ is mean equicontinuous, as $(X,T)$ is logarithmically mean equicontinuous.
\end{proof}

Combining Remark~\ref{rem:log-mean-eq} and Theorem~\ref{thm:log-meqn-eq-main-result}, we get Theorem~\ref{thm:main-result}.

\begin{defn}
We say that a topological dynamical system $(X,T)$ is \emph{Weyl mean equicontinuous} if for any $\eps>0$ there exists  $\delta>0$ such that for any $x,y\in X$ with $d(x,y)<\delta$ one has 
\begin{equation}\label{eqn:def_Banach}
    \limsup_{n-m\to\infty} \frac{1}{n-m}\sum_{k=m+1}^{n}d(T^{k-1}x,T^{k-1}y)<\eps.
\end{equation}
\end{defn}

Note that Weyl mean equicontinuity is called Banach mean equicontinuity in \cite{LTY15}, as the formula \eqref{eqn:def_Banach} is similar to the definition of upper Banach density.
We follow \cite{DG16}  in calling this notion Weyl mean equicontinuity  to keep an analogy with the Weyl pseudo-metric.
The authors of \cite{LTY15} asked whether Weyl mean equicontinuity is equivalent to mean equicontinuity.
This was confirmed in \cite{DG16} for minimal systems and in \cite{QZ20} for all topological dynamical systems. Recall that a topological dynamical system $(X,T)$ is \emph{minimal} if for every $x\in X$, the orbit $\orb(x,T)$ is dense in $X$. 

\begin{thm}[\cite{DG16,QZ20}]\label{thm:equiv-W-meq}
Let $(X,T)$ be a topological dynamical system.
Then $(X,T)$ is Weyl mean equicontinuous if and only if it is mean equicontinuous.
\end{thm}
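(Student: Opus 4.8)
The plan is to prove the two implications separately. The forward implication, that Weyl mean equicontinuity implies mean equicontinuity, is immediate, while for the converse I would follow the same strategy as the proof of Theorem~\ref{thm:log-meqn-eq-main-result}, replacing the comparison of Ces\`aro and harmonic averages by a comparison of Ces\`aro and Weyl averages.

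For the easy direction, I would simply note that taking $m=0$ in \eqref{eqn:def_Banach} recovers the Ces\`aro average, so that $\limsup_{n\to\infty}\frac{1}{n}\sum_{k=1}^n d(T^{k-1}x,T^{k-1}y)\le \limsup_{n-m\to\infty}\frac{1}{n-m}\sum_{k=m+1}^n d(T^{k-1}x,T^{k-1}y)$ holds for every pair $x,y\in X$. Hence any $\delta$ witnessing the Weyl estimate also witnesses the mean estimate, and Weyl mean equicontinuity implies mean equicontinuity.

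For the converse, suppose $(X,T)$ is mean equicontinuous. First I would pass to the product system $(X\times X,T\times T)$, which is again mean equicontinuous for the sum (or max) product metric, and hence logarithmically mean equicontinuous by Remark~\ref{rem:log-mean-eq}. Applying Lemma~\ref{lem:log-mean-eq-orbit-unqiuley-ergodic} to this product, I obtain that $(\overline{\orb((x,y),T\times T)},T\times T)$ is uniquely ergodic for every $(x,y)\in X\times X$; let $\mu_{(x,y)}$ denote its unique invariant measure. Since $d$ is continuous on $X\times X$ and $d\bigl((T\times T)^{k-1}(x,y)\bigr)=d(T^{k-1}x,T^{k-1}y)$, I would then apply assertions (2) and (5) of Theorem~\ref{thm:unique-erg} to this uniquely ergodic system, obtaining
\[
\lim_{n\to\infty}\frac{1}{n}\sum_{k=1}^n d(T^{k-1}x,T^{k-1}y)=\int d\dd\mu_{(x,y)}=\lim_{n-m\to\infty}\frac{1}{n-m}\sum_{k=m+1}^n d(T^{k-1}x,T^{k-1}y).
\]
Both limits therefore exist and coincide for every pair, so the Weyl limsup in \eqref{eqn:def_Banach} equals the Ces\`aro limsup, and the $\delta$ witnessing mean equicontinuity automatically witnesses Weyl mean equicontinuity.

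I expect the main obstacle to be the unique ergodicity of the orbit closures of the product system: a priori $\overline{\orb((x,y),T\times T)}$ might support several ergodic measures along which the Weyl averages of $d$ differ, and this possibility is exactly what Lemma~\ref{lem:log-mean-eq-orbit-unqiuley-ergodic} rules out (via the continuity of the empirical-measure map in Lemma~\ref{lem:log-mean-equi-vlog}). Once unique ergodicity is secured, the coincidence of Ces\`aro and Weyl averages for the fixed continuous function $d$ is furnished by the equivalence (2)$\Leftrightarrow$(5) in Theorem~\ref{thm:unique-erg}, and the remainder reduces to the elementary comparison of limsups above.
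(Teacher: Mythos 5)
Your argument is correct and contains no circularity, but note that the paper itself does not prove this statement at all: Theorem~\ref{thm:equiv-W-meq} is imported as a black box from \cite{DG16} (minimal systems) and \cite{QZ20} (general systems). What you have written out is precisely the ``independent proof'' that the authors allude to just before their proposition on Weyl logarithmic mean equicontinuity, where they remark that the idea of that proposition's proof also yields a new proof of the equivalence of mean and Weyl mean equicontinuity. Your scheme follows it faithfully: the easy direction by restricting \eqref{eqn:def_Banach} to $m=0$; the hard direction by passing to the product system, upgrading its mean equicontinuity to logarithmic mean equicontinuity via Remark~\ref{rem:log-mean-eq} so that Lemma~\ref{lem:log-mean-eq-orbit-unqiuley-ergodic} applies and gives unique ergodicity of each orbit closure $\overline{\orb((x,y),T\times T)}$, and then invoking the Oxtoby-type equivalence (2)$\Leftrightarrow$(5) of Theorem~\ref{thm:unique-erg} for the continuous function $d$ to identify both the Ces\`aro and the Weyl limits with $\int d\dd\mu_{(x,y)}$. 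None of the ingredients you use (Lemma~\ref{lem:log-mean-seq}, Lemma~\ref{lem:log-mean-equi-vlog}, Lemma~\ref{lem:log-mean-eq-orbit-unqiuley-ergodic}, Theorem~\ref{thm:unique-erg}) depends on Theorem~\ref{thm:equiv-W-meq}, so the logic is sound. Compared with the cited proofs, which proceed by quite different means (isomorphic extensions in \cite{DG16}, a separate argument for the general case in \cite{QZ20}), your route is self-contained within this paper's framework and handles arbitrary, not necessarily minimal, systems uniformly. The only mild inefficiency is the detour through logarithmic averages --- forced on you because the paper only records the ``orbit closures are uniquely ergodic'' lemma in its logarithmic form --- but passing through Remark~\ref{rem:log-mean-eq} to reach it is perfectly legitimate.
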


\begin{defn}
We say that a topological dynamical system $(X,T)$ is \emph{Weyl logarithmically mean equicontinuous} if for any $\eps>0$ there exists $\delta>0$ such that for any $x,y\in X$ with $d(x,y)<\delta$, one has
\[
    \limsup_{n-m\to\infty} \frac{1}{H_{n-m}}\sum_{k=m+1}^{n}\frac{1}{k-m}d(T^{k-1}x,T^{k-1}y)<\eps.
\]
\end{defn}
\begin{rem}\label{rem:W-meq}
It follows from Lemma~\ref{lem:log-mean-seq}, that if $(X,T)$ is Weyl mean equicontinuous, then it is also logarithmically Weyl mean equicontinuous.    
\end{rem}

The following result follows easily from the equivalence of logarithmic mean equicontinuity and mean equicontinuity (Theorem \ref{thm:main-result}),
and the equivalence of Weyl mean equicontinuity and mean equicontinuity (Theorem \ref{thm:equiv-W-meq}).
Here we also provide another independent proof.
The idea of the proof also provides a new proof of the following result: a topological dynamical system is mean equicontinuous if and only if it is Weyl mean equicontinuous.

\begin{prop}
Let $(X,T)$ be a topological dynamical system. 
Then $(X,T)$ is logarithmically mean equicontinuous if and only if it is Weyl logarithmically mean equicontinuous.
\end{prop}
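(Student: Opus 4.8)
The plan is to prove both implications, treating one as essentially immediate and giving a self-contained argument for the other. For the direction that Weyl logarithmic mean equicontinuity implies logarithmic mean equicontinuity, I would simply specialize the defining inequality to the pairs with $m=0$: the expression $\frac{1}{H_{n-m}}\sum_{k=m+1}^{n}\frac{1}{k-m}d(T^{k-1}x,T^{k-1}y)$ reduces exactly to $\frac{1}{H_n}\sum_{k=1}^{n}\frac{1}{k}d(T^{k-1}x,T^{k-1}y)$ when $m=0$, and $\limsup_{n-m\to\infty}$ dominates $\limsup_{n\to\infty}$. Hence the same $\delta$ witnesses logarithmic mean equicontinuity. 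For the converse, a quick route is to chain the equivalences already available: logarithmic mean equicontinuity equals mean equicontinuity by Theorem~\ref{thm:main-result}, mean equicontinuity equals Weyl mean equicontinuity by Theorem~\ref{thm:equiv-W-meq}, and Weyl mean equicontinuity implies Weyl logarithmic mean equicontinuity by Remark~\ref{rem:W-meq}. I would record this but then give an independent argument that does not invoke Theorem~\ref{thm:equiv-W-meq}.

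The independent argument runs through unique ergodicity of orbit closures. First I would pass to the product $(X\times X, T\times T)$, which inherits logarithmic mean equicontinuity (as noted in the proof of Theorem~\ref{thm:log-meqn-eq-main-result}). By Lemma~\ref{lem:log-mean-eq-orbit-unqiuley-ergodic}, for each $(x,y)\in X\times X$ the subsystem $(\overline{\orb((x,y),T\times T)}, T\times T)$ is uniquely ergodic; denote its unique invariant measure by $\mu_{(x,y)}$. Regarding the metric $d$ as a continuous function on $X\times X$ and applying Theorem~\ref{thm:unique-erg-log-mean} — specifically the equivalence of unique ergodicity with assertion (2) (the shifted, Weyl logarithmic averages) and with assertion (3) (the plain logarithmic averages) — I obtain that both
\[
\lim_{n\to\infty}\frac{1}{H_n}\sum_{k=1}^n\frac{1}{k}d(T^{k-1}x,T^{k-1}y)
\quad\text{and}\quad
\lim_{n-m\to\infty}\frac{1}{H_{n-m}}\sum_{k=m+1}^n\frac{1}{k-m}d(T^{k-1}x,T^{k-1}y)
\]
exist and equal $\int d\dd\mu_{(x,y)}$. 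In particular the Weyl logarithmic $\limsup$ is in fact a genuine limit, coinciding with the plain logarithmic limit.

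Finally I would feed in logarithmic mean equicontinuity of $(X,T)$ itself: given $\eps>0$, choose $\delta>0$ so that $d(x,y)<\delta$ forces the plain logarithmic $\limsup$ to be $<\eps$; since that quantity equals $\int d\dd\mu_{(x,y)}$, we conclude $\int d\dd\mu_{(x,y)}<\eps$, whence the Weyl logarithmic average converges to a value $<\eps$ as well. This is exactly Weyl logarithmic mean equicontinuity, with the same $\delta$. The main obstacle here is conceptual rather than computational: the whole argument hinges on the observation that logarithmic mean equicontinuity upgrades orbit closures to uniquely ergodic systems (Lemma~\ref{lem:log-mean-eq-orbit-unqiuley-ergodic}), which is what forces both averaging schemes to converge to the single integral $\int d\dd\mu_{(x,y)}$ and thereby collapses the Weyl $\limsup$ into a limit bounded by $\eps$. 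Replacing Theorem~\ref{thm:unique-erg-log-mean} by the classical Theorem~\ref{thm:unique-erg} in the same scheme yields, \emph{mutatis mutandis}, a new proof that mean equicontinuity is equivalent to Weyl mean equicontinuity.
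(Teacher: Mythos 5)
Your proposal is correct and follows essentially the same route as the paper's own proof: the easy direction by taking $m=0$, and the converse by passing to the product system, invoking Lemma~\ref{lem:log-mean-eq-orbit-unqiuley-ergodic} to get unique ergodicity of orbit closures, and then equating the plain and Weyl logarithmic limits with $\int d\dd\mu_{(x,y)}$ via Theorem~\ref{thm:unique-erg-log-mean}\,(2) and (3). Even your closing observation --- that substituting Theorem~\ref{thm:unique-erg} for Theorem~\ref{thm:unique-erg-log-mean} gives a new proof of the equivalence of mean and Weyl mean equicontinuity --- mirrors the remark the paper makes just before this proposition.
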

\begin{proof}
It follows from the definition that if $(X,T)$  is Weyl logarithmically mean equicontinuous, then it is also  logarithmically mean equicontinuous.

Now assume that  $(X,T)$ is logarithmically mean equicontinuous.
By Lemma~\ref{lem:log-mean-eq-orbit-unqiuley-ergodic}, for every $(x,y)\in X\times X$, $(\overline{\orb((x,y),T\times T)},T\times T)$ is uniquely ergodic.
Let $\mu_{(x,y)}$ be the unique measure on $(\overline{\orb((x,y),T\times T)},T\times T)$.
Since the metric $d$ is a continuous function on $X\times X$, 
by Theorem~\ref{thm:unique-erg-log-mean}\,(2) and (3), for every $(x,y)\in X\times X$ we have 
\begin{align*}
\lim_{n\to\infty}  \frac{1}{H_{n-m}}\sum_{k=m+1}^{n}\frac{1}{k-m}d(T^{k-1}x,T^{k-1}y) &
= \int d(x,y) \dd\mu_{(x,y)} \\
& =
\lim_{n\to\infty} \frac{1}{H_n}\sum_{k=1}^{n}\frac{1}{k}d(T^{k-1}x,T^{k-1}y). 
\end{align*}
for every $(x,y)\in X\times X$.
Then $(X,T)$ is  Weyl logarithmically mean equicontinuous, as $(X,T)$ is logarithmically mean equicontinuous.
\end{proof}

\begin{defn}
We say that a topological dynamical system $(X,T)$ is \emph{mean sensitive} if there exists a constant $\eps>0$ such that for any $x\in X$ and  any $\delta>0$ there exists $y\in X$ with $d(x,y)<\delta$ satisfying
\[
    \limsup_{n\to\infty} \frac{1}{n}\sum_{k=1}^{n}
    d(T^{k-1}x,T^{k-1}y)\geq \eps.
\]
\end{defn}

The following dichotomy result was proved in \cite[Corollary 5.5]{LTY15}.

\begin{thm}\label{thm:dich-mean-eq}
Let $(X,T)$ be a minimal system.
Then $(X,T)$ is either mean equicontinuous or mean sensitive.
\end{thm}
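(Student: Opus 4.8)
The plan is to introduce the upper mean pseudometric
\[
\overline{D}(x,y)=\limsup_{n\to\infty}\frac{1}{n}\sum_{k=1}^{n}d(T^{k-1}x,T^{k-1}y),
\]
and to recast both notions as statements about $\overline{D}$: mean equicontinuity says that for every $\eps>0$ there is $\delta>0$ with $\overline{D}(x,y)<\eps$ whenever $d(x,y)<\delta$, while mean sensitivity provides a single $\eps_0>0$ for which every ball $B(x,\delta)$ contains a point $y$ with $\overline{D}(x,y)\ge\eps_0$. First I would record the two structural properties that drive the argument: $\overline{D}$ satisfies the triangle inequality (so it is a pseudometric bounded by $\operatorname{diam}(X)$), and it is $T\times T$-invariant, that is $\overline{D}(T^{m}x,T^{m}y)=\overline{D}(x,y)$ for all $m\ge 0$; the latter holds because shifting a Cesàro average by $m$ terms changes it by at most $2m\operatorname{diam}(X)/n\to 0$. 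Granting these, the two alternatives are mutually exclusive: if the system were both mean sensitive (witness $\eps_0$) and mean equicontinuous, the $\delta$ supplied by mean equicontinuity for $\eps_0$ would contradict mean sensitivity at any point $x$. Thus it suffices to prove that a minimal system which is \emph{not} mean sensitive is mean equicontinuous.

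So assume $(X,T)$ is minimal and not mean sensitive, and fix $\eps>0$. Negating mean sensitivity yields a point $x_0$ and a radius $\delta_0>0$ such that $\overline{D}(x_0,z)<\eps$ for every $z\in B(x_0,\delta_0)$. By minimality every orbit enters $B(x_0,\delta_0/2)$, so $X=\bigcup_{n\ge 0}T^{-n}B(x_0,\delta_0/2)$; compactness gives finitely many exponents $n_1,\dots,n_r$ with $X=\bigcup_{i=1}^{r}T^{-n_i}B(x_0,\delta_0/2)$. The finite family $T^{n_1},\dots,T^{n_r}$ is uniformly equicontinuous, so there is $\delta'>0$ with $\max_i d(T^{n_i}x,T^{n_i}y)<\delta_0/2$ whenever $d(x,y)<\delta'$. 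Now take any $x,y$ with $d(x,y)<\delta'$ and choose $i$ with $T^{n_i}x\in B(x_0,\delta_0/2)$; then $T^{n_i}y\in B(x_0,\delta_0)$ as well, so $\overline{D}(x_0,T^{n_i}x)<\eps$ and $\overline{D}(x_0,T^{n_i}y)<\eps$. The triangle inequality gives $\overline{D}(T^{n_i}x,T^{n_i}y)<2\eps$, and $T\times T$-invariance turns this into $\overline{D}(x,y)<2\eps$. Since $\eps>0$ was arbitrary, this is exactly mean equicontinuity.

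The point I expect to be delicate is that $\overline{D}$ is only a limsup of continuous functions and is typically not continuous, so the usual strategy of showing that the set of mean-equicontinuity points is open and dense does not apply verbatim. The argument above circumvents this: rather than producing a single point that is good for all $\eps$ simultaneously (which would require a Baire-category or $G_\delta$ analysis), I uniformize separately for each $\eps$, using minimality only through the finite cover and transporting the estimate into the distinguished ball $B(x_0,\delta_0)$ by the exact invariance $\overline{D}(T^{n_i}x,T^{n_i}y)=\overline{D}(x,y)$. This invariance, which relies on the negligibility of a bounded shift inside a Cesàro average, is the crucial structural input, and the same scheme should adapt to the Weyl-mean setting once one verifies that the corresponding pseudometric enjoys the same two properties.
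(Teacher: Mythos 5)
Your proof is correct, but it is worth noting that the paper itself does not prove this statement at all: Theorem~\ref{thm:dich-mean-eq} is quoted from \cite[Corollary 5.5]{LTY15}, so your argument should be compared with the proof there. In \cite{LTY15} the dichotomy is obtained through an intermediate notion: one first shows that a transitive system which is not mean sensitive is \emph{almost} mean equicontinuous (the set of mean-equicontinuity points is a residual $G_\delta$ set, via a Baire-category analysis of the sets where the mean pseudometric is small), and then uses minimality to upgrade almost mean equicontinuity to mean equicontinuity. Your argument short-circuits this: negating mean sensitivity hands you, for each fixed $\eps$, a single point $x_0$ whose $\delta_0$-ball is $\eps$-good, and you then spread this estimate over all of $X$ using only the finite subcover $X=\bigcup_{i}T^{-n_i}B(x_0,\delta_0/2)$ coming from minimality, uniform continuity of the finitely many maps $T^{n_1},\dots,T^{n_r}$, the triangle inequality for $\overline{D}$, and the exact $T\times T$-invariance $\overline{D}(T^m x,T^m y)=\overline{D}(x,y)$ (which you justify correctly: a shift by $m$ terms perturbs the Ces\`aro average by $O(m/n)$). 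This is an Auslander--Yorke-style argument transplanted to the Besicovitch pseudometric, and the exact invariance of $\overline{D}$ — unavailable for the sup-metric in the classical equicontinuity/sensitivity dichotomy — is precisely what makes it work so cleanly. What you gain is a self-contained, more elementary proof that avoids the residual-set machinery entirely; what you lose, relative to \cite{LTY15}, is the finer information that proof yields for merely transitive systems (the residual set of mean-equicontinuity points and the transitive dichotomy with almost mean equicontinuity), which your per-$\eps$ uniformization, relying essentially on minimality, does not recover. All the individual steps you flag — the triangle inequality for $\limsup$-averages, the invariance, the finite cover, and the mutual exclusivity of the two alternatives — check out.
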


\begin{defn}
We say that a topological dynamical system $(X,T)$ is \emph{logarithmically mean sensitive} if there exists a constant $\eps>0$ such that for any $x\in X$ and any $\delta>0$ there exists $y\in X$ with $d(x,y)<\delta$ satisfying
\[
    \limsup_{n\to\infty} \frac{1}{H_n}\sum_{k=1}^{n}\frac{1}{k}d(T^{k-1}x,T^{k-1}y)\geq \eps.
\]
\end{defn}

Using Theorem \ref{thm:log-meqn-eq-main-result} and Lemma \ref{lem:log-mean-seq} we can restate Theorem~\ref{thm:dich-mean-eq} as the following dichotomy result.

\begin{prop} \label{prop:dich-log-mean-eq}
Let $(X,T)$ be a minimal system.
Then $(X,T)$ is either logarithmically mean equicontinuous or logarithmically mean sensitive.
\end{prop}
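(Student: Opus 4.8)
The plan is to deduce the logarithmic dichotomy from the Ces\`aro dichotomy of Theorem~\ref{thm:dich-mean-eq}, combined with the equivalence of the two equicontinuity notions (Theorem~\ref{thm:main-result}) and the averaging inequalities of Lemma~\ref{lem:log-mean-seq}. First I would record that the two conclusions are mutually exclusive: if $(X,T)$ were simultaneously logarithmically mean equicontinuous and logarithmically mean sensitive with sensitivity constant $\eps_0$, then applying logarithmic mean equicontinuity with $\eps=\eps_0$ produces a $\delta_0$, while applying logarithmic mean sensitivity at any point with $\delta=\delta_0$ produces a nearby point violating it; this is immediate from the definitions and needs no dynamics. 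It then remains to prove that at least one alternative always holds, that is, that the failure of logarithmic mean equicontinuity forces logarithmic mean sensitivity.

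For the exhaustive part I would argue as follows. By Theorem~\ref{thm:dich-mean-eq} the minimal system $(X,T)$ is either mean equicontinuous or mean sensitive. In the first case Remark~\ref{rem:log-mean-eq} (a consequence of Lemma~\ref{lem:log-mean-seq}) already yields logarithmic mean equicontinuity, so the first alternative holds. In the second case $(X,T)$ is not mean equicontinuous, hence by Theorem~\ref{thm:main-result} not logarithmically mean equicontinuous, and the whole problem reduces to the single implication that \emph{a mean sensitive minimal system is logarithmically mean sensitive}. The easy half of the correspondence is the reverse implication, which follows at once from Lemma~\ref{lem:log-mean-seq}: the harmonic $\limsup$ never exceeds the Ces\`aro $\limsup$, so any logarithmic sensitivity witness is automatically a mean sensitivity witness with the same constant.

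To establish the crux I would fix the mean sensitivity constant $\eps$, a point $x$, and a radius $\delta$, and choose (by mean sensitivity) a witness $y_0$ with $d(x,y_0)<\delta$ and $\limsup_n \frac1n\sum_{k=1}^n d(T^{k-1}x,T^{k-1}y_0)\geq\eps$. Passing to a subsequence realizing this $\limsup$ yields $\mu\in M_{T\times T}(X\times X)$ with $\int d\dd\mu\geq\eps$; taking an ergodic component $\nu$ of $\mu$ with $\int d\dd\nu\geq\eps$ and invoking Lemma~\ref{lem:ergodic-generic-points}, I obtain $\nu$-generic pairs for which the Ces\`aro average of $d$ actually \emph{converges} to $\int d\dd\nu\geq\eps$. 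For such pairs the Ces\`aro $\liminf$ equals the limit, so Lemma~\ref{lem:log-mean-seq} upgrades this to a genuine logarithmic average $\geq\eps$. Finally, since in a minimal system the support of the first marginal $\pi_1\nu$ is all of $X$, these logarithmically separated pairs have first coordinates dense in $X$, and I would use minimality (together with the shift invariance $\vlog_T(x)=\vlog_T(Tx)$) to relocate such a pair so that its witness lies within $\delta$ of the prescribed $x$.

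The main obstacle is precisely this last relocation step: logarithmic mean sensitivity demands that the separating point $y$ be \emph{close} to the prescribed point $x$, whereas the ergodic-component argument naturally produces pairs whose two coordinates are far apart on average, since it is exactly their separation that makes $\int d\dd\nu$ large. Reconciling ``separated'' with ``nearby'' is where minimality must genuinely be exploited, and it is the delicate point of the argument; it mirrors the corresponding step in the proof of the Ces\`aro dichotomy (Theorem~\ref{thm:dich-mean-eq}). An alternative route that bypasses the ergodic decomposition would be to transcribe the Li--Tu--Ye argument directly into the logarithmic setting, showing via the continuity of $x\mapsto\vlog_T(x)$ (Lemma~\ref{lem:log-mean-equi-vlog}) and the invariance $\vlog_T(x)=\vlog_T(Tx)$ that the set of logarithmic mean equicontinuity points is invariant, and therefore, by minimality, either empty or all of $X$.
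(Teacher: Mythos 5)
Your reduction is sound up to a point: the mutual exclusivity of the two alternatives, the mean equicontinuous branch (Remark~\ref{rem:log-mean-eq}), and the identification of the crux as ``every mean sensitive minimal system is logarithmically mean sensitive'' are all correct, and your ergodic-component construction does produce pairs whose harmonic averages of $d$ are at least $\eps$. But the relocation step you flag is a genuine gap, not a deferrable detail, and the route you chose cannot deliver it: the ergodic component $\nu$ was selected precisely so that $\int d \dd\nu\geq\eps$, so $\nu$-typical pairs are far apart, and there is no reason for the support of $\nu$ to meet $B(x,\delta)\times B(x,\delta)$ at all. Minimality lets you bring the \emph{first} coordinate of a $\nu$-generic pair near the prescribed $x$, but the second coordinate is then uncontrolled; the triangle inequality only tells you that \emph{one} of the two coordinates is harmonically separated from $x$, and it may well be the far one, which is useless for sensitivity. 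Your fallback is also circular: Lemma~\ref{lem:log-mean-equi-vlog} has logarithmic mean equicontinuity as its \emph{hypothesis}, so it cannot be invoked on the branch where that property is exactly what has failed.

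For comparison, the paper disposes of the proposition in one line: since logarithmic mean equicontinuity coincides with mean equicontinuity (Theorem~\ref{thm:log-meqn-eq-main-result} together with Lemma~\ref{lem:log-mean-seq}), the dichotomy of Theorem~\ref{thm:dich-mean-eq} can be restated in logarithmic terms. What makes this restatement legitimate--and what fills your gap--is an elementary transport argument (in effect the Li--Tu--Ye proof transcribed to harmonic averages), with no ergodic theory. If $(X,T)$ is mean sensitive, it is not mean equicontinuous, hence by Theorem~\ref{thm:log-meqn-eq-main-result} not logarithmically mean equicontinuous; this yields $\eps>0$ and, for every $\delta''>0$, a pair $(u,v)$ with $d(u,v)<\delta''$ whose harmonic limsup separation is at least $\eps$. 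Now fix $x$ and $\delta$. By minimality and compactness there is $N$ such that every point enters $B(x,\delta/4)$ within $N$ steps, and the finitely many maps $T^0,\dots,T^N$ are uniformly equicontinuous, so for $\delta''$ small enough a bad pair $(u,v)$ and a suitable $n\leq N$ satisfy $T^nu,\,T^nv\in B(x,\delta/2)$. Crucially, the harmonic limsup is unchanged by applying $T^n$: shifting the orbit by $n$ alters the sums $\sum_{k=1}^{m}\frac{1}{k}d(T^{k-1}\cdot,T^{k-1}\cdot)$ by a quantity bounded independently of $m$ (this is the same computation that gives $\vlog_T(x)=\vlog_T(Tx)$ in the remark following Lemma~\ref{lem:vlogT-x}), which is negligible against $H_m$. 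Finally, $d(T^{k-1}(T^nu),T^{k-1}(T^nv))\leq d(T^{k-1}(T^nu),T^{k-1}x)+d(T^{k-1}x,T^{k-1}(T^nv))$ shows that one of the two points $T^nu,T^nv\in B(x,\delta)$ has harmonic limsup distance from $x$ at least $\eps/2$, i.e.\ $(X,T)$ is logarithmically mean sensitive with constant $\eps/2$. This transport of bad pairs by finitely many iterates, rather than the ergodic decomposition, is the idea your proposal is missing.
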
 

Combining Theorem~\ref{thm:log-meqn-eq-main-result} and Proposition~\ref{prop:dich-log-mean-eq}, we have the following. 

\begin{coro}\label{cor:mean-sens}
Let $(X,T)$ be a minimal system.
Then $(X,T)$ is logarithmically mean sensitive
if and only if it is mean sensitive.
\end{coro}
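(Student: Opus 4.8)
The plan is to deduce the equivalence entirely from the two dichotomy results together with the equivalence of the two notions of mean equicontinuity, so that the statement becomes a short piece of bookkeeping rather than an argument from scratch. The key preliminary observation is that each dichotomy is in fact an \emph{exclusive} disjunction. A minimal system cannot be simultaneously mean equicontinuous and mean sensitive: if it were mean equicontinuous, then for the particular $\eps>0$ witnessing mean sensitivity there would be a $\delta>0$ forcing every $\delta$-close pair $x,y$ to satisfy $\limsup_{n\to\infty}\frac1n\sum_{k=1}^n d(T^{k-1}x,T^{k-1}y)<\eps$, contradicting the existence of arbitrarily close pairs whose mean distance is at least $\eps$. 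The identical one-line argument shows that logarithmic mean equicontinuity and logarithmic mean sensitivity are mutually exclusive. Hence, for a minimal system, Theorem~\ref{thm:dich-mean-eq} upgrades to the equivalence that $(X,T)$ is mean sensitive if and only if it is not mean equicontinuous, and Proposition~\ref{prop:dich-log-mean-eq} upgrades to the equivalence that $(X,T)$ is logarithmically mean sensitive if and only if it is not logarithmically mean equicontinuous.

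With these two equivalences in hand I would simply chain them through the main result. By Theorem~\ref{thm:main-result}, namely Remark~\ref{rem:log-mean-eq} combined with Theorem~\ref{thm:log-meqn-eq-main-result}, a system is mean equicontinuous if and only if it is logarithmically mean equicontinuous, and therefore the negations are equivalent as well. Combining the three equivalences yields, for a minimal $(X,T)$, that $(X,T)$ is mean sensitive if and only if it is not mean equicontinuous, if and only if it is not logarithmically mean equicontinuous, if and only if it is logarithmically mean sensitive. This is exactly the asserted conclusion.

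Since all the mathematical substance already resides in Theorem~\ref{thm:main-result} and in the two dichotomy theorems, I do not expect a genuine obstacle. The only point demanding a word of care is reading each dichotomy as a true either/or, which reduces to the mutual-exclusivity observation above; I would also emphasise that minimality is used precisely because it is the standing hypothesis of Theorem~\ref{thm:dich-mean-eq} and Proposition~\ref{prop:dich-log-mean-eq}, and is needed nowhere else in the chain.
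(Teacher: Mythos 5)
Your proposal is correct and takes essentially the same route as the paper, whose entire proof of this corollary is the single sentence ``Combining Theorem~\ref{thm:log-meqn-eq-main-result} and Proposition~\ref{prop:dich-log-mean-eq}, we have the following.'' The chain you spell out---each dichotomy upgraded to an exclusive disjunction via the mutual-exclusivity observation, then linked through the equivalence of Theorem~\ref{thm:main-result}---is precisely the bookkeeping that sentence leaves implicit.
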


\begin{rem}
It is interesting to know whether Corollary \ref{cor:mean-sens} holds for transitive systems.
\end{rem}

\section{Logarithmic mean equicontinuity with respect to an ergodic measure}

In \cite{HLY11} Huang et al.\  studied equicontinuity and sensitivity of a topological dynamical system with respect to invariant measures.
Following these ideas, García-Ramos studied in \cite{G17} the notions of mean equicontinuity and mean sensitivity relative to invariant measures.
Inspired by these results, we introduce logarithmic mean equicontinuity and logarithmic mean sensitivity with respect to a fixed invariant measure of a given topological dynamical system. 
We will show that when this measure is ergodic, logarithmic mean equicontinuity is equivalent to mean equicontinuity.

\begin{defn}
Let $(X,T)$ be a topological dynamical system and $\mu\in M_T(X)$.
We say that $(X,T)$ is \emph{$\mu$-mean equicontinuous} if for any $\eta\in (0,1)$ there exists a Borel subset $X_\eta$ of $X$ with $\mu(X_\eta)>\eta$ such that for any $\eps>0$ there exists  $\delta>0$ such that for any $x,y\in X_\eta$ with $d(x,y)<\delta$ one has 
\[
    \limsup_{n\to\infty} \frac{1}{n}\sum_{k=1}^{n}
    d(T^{k-1}x,T^{k-1}y)<\eps.
\]
We say that $(X,T)$ is \emph{$\mu$-mean sensitive} if there exists a constant $\eps>0$ such that for any Borel subset $B$ of $X$ with $\mu(B)>0$ there exist $x,y\in B$ such that 
\[
    \limsup_{n\to\infty} \frac{1}{n}\sum_{k=1}^{n}
    d(T^{k-1}x,T^{k-1}y)\geq \eps.
\]
\end{defn}

The following dichotomy result was proved in \cite[Theorem 3.6]{G17}.

\begin{thm} \label{thm:dict-mu-mean-eq}
Let $(X,T)$ be a topological dynamical system and $\mu\in M_T^{\mathrm{erg}}(X)$.
Then $(X,T)$ is either $\mu$-mean equicontinuous
or $\mu$-mean sensitive.
\end{thm}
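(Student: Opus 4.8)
The plan is to encode both properties through the shift-invariant \emph{mean pseudometric}
\[
\overline{d}(x,y)=\limsup_{n\to\infty}\frac{1}{n}\sum_{k=1}^{n}d(T^{k-1}x,T^{k-1}y),
\]
and then to run a measure-theoretic analogue of the Auslander--Yorke / Li--Tu--Ye dichotomy (Theorem~\ref{thm:dich-mean-eq}) in which ergodicity of $\mu$ plays the role that minimality plays in the topological setting. First I would record the elementary properties of $\overline{d}$: it is Borel on $X\times X$, being a $\limsup$ of the continuous functions $(x,y)\mapsto\frac{1}{n}\sum_{k=1}^{n}d(T^{k-1}x,T^{k-1}y)$; it is symmetric, bounded by $\mathrm{diam}\,X$, and satisfies the triangle inequality; and, crucially, it is $(T\times T)$-invariant, that is $\overline{d}(Tx,Ty)=\overline{d}(x,y)$, since shifting the averaging window by a fixed amount does not change the Cesàro $\limsup$. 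In this language $\mu$-mean equicontinuity asserts that for every $\eta$ there is a set $X_\eta$ with $\mu(X_\eta)>\eta$ on which $\overline{d}$ is uniformly $d$-continuous along the diagonal, while $\mu$-mean sensitivity asserts that there is a fixed $\eps_0>0$ such that every positive-measure set contains a pair at $\overline{d}$-distance at least $\eps_0$.

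The first (easy) half is that the two properties cannot hold simultaneously. Suppose both held, with sensitivity constant $\eps_0$. Applying $\mu$-mean equicontinuity with $\eta=\tfrac12$ and $\eps=\eps_0$ produces $X_{1/2}$ with $\mu(X_{1/2})>\tfrac12$ and a $\delta>0$ such that $x,y\in X_{1/2}$ and $d(x,y)<\delta$ force $\overline{d}(x,y)<\eps_0$. Covering the compact space $X$ by finitely many open balls of diameter $<\delta$, one of them, say $U$, meets $X_{1/2}$ in a set $B=U\cap X_{1/2}$ of positive measure. Sensitivity then supplies $x,y\in B$ with $\overline{d}(x,y)\ge\eps_0$, yet $x,y\in X_{1/2}$ and $d(x,y)<\delta$, a contradiction.

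For the substantive half I would argue by contraposition: assuming $(X,T)$ is \emph{not} $\mu$-mean sensitive, I would build, for each $\eta\in(0,1)$, the set $X_\eta$. Negating sensitivity yields, for every $k$, a set $B_k$ with $\mu(B_k)>0$ on which $\overline{d}<1/k$. The invariance of $\overline{d}$ propagates this along the orbit: $\overline{d}<1/k$ holds on each translate $T^{-n}B_k$, and by ergodicity $\bigcup_{n\ge0}T^{-n}B_k$ has full measure. The organizing principle is that the set of points that are ``mean-equicontinuous in measure'' is $T$-invariant, hence by ergodicity has measure $0$ or $1$; the non-sensitive hypothesis forces measure $1$. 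The key mechanism driving this is that whenever the orbits of $x$ and $y$ visit $B_k$ at a \emph{common} time $n$, i.e.\ $T^nx,T^ny\in B_k$, then $\overline{d}(x,y)=\overline{d}(T^nx,T^ny)<1/k$; combining this with the pointwise ergodic theorem (which guarantees each orbit returns to $B_k$ with positive frequency) should force $\overline{d}$ to be small off a set of small measure.

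The main obstacle, and the genuinely technical step, is exactly this cross-term control: to turn ``each orbit visits $B_k$ often'' into ``$\mu\times\mu$-almost every pair shares a visit to $B_k$.'' Since $\mu\times\mu$ need not be $(T\times T)$-ergodic, one cannot simply invoke the ergodic theorem for the product, and different translates $T^{-n}B_k$ carry no a priori control on $\overline{d}$ between them. I expect to resolve this by a recurrence argument in the product system combined with an Egorov/Lusin-type uniformization: having shown that for each $\eps$ the set $\{(x,y):\overline{d}(x,y)<\eps\}$ is large for $\mu\times\mu$, I would extract a single set $X_\eta$ of measure $>\eta$ on which the implication $d(x,y)<\delta\Rightarrow\overline{d}(x,y)<\eps$ holds with a \emph{uniform} modulus $\delta=\delta(\eps)$, matching $d$-closeness to membership in a common small-diameter cell via the compactness covering used above. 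Assembling this uniform modulus across a countable sequence $\eps\to0$, and intersecting the resulting large sets, would deliver the required $X_\eta$ and complete the dichotomy.
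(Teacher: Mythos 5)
The paper does not prove this theorem at all: it quotes it from \cite[Theorem 3.6]{G17}, so your attempt has to be measured against the known García-Ramos/Huang--Lu--Ye argument. Your easy half (mutual exclusivity) is correct, and the opening moves of your substantive half are exactly right: negating sensitivity gives sets $B_k$ of positive measure and $\overline{d}$-diameter $<1/k$, the $(T\times T)$-invariance of $\overline{d}$ propagates this to every translate $T^{-n}B_k$, and ergodicity makes $\bigcup_{n\ge 0}T^{-n}B_k$ a full-measure set. But the plan you propose for finishing rests on a false intermediate claim, namely that non-sensitivity forces the set $D_\eps=\{(x,y):\overline{d}(x,y)<\eps\}$ to be large for $\mu\times\mu$. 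Take $X=\mathbb{R}/\mathbb{Z}$, $T$ an irrational rotation, $\mu$ Lebesgue measure: here $T$ is an isometry, so $\overline{d}=d$, the system is mean equicontinuous (hence certainly not $\mu$-mean sensitive), yet $\mu\times\mu(D_\eps)\approx 2\eps$, which tends to $0$ rather than to $1$. The same example kills the ``common visit time'' mechanism you want to use: if $y=x+c$ with $c$ outside the difference set of $B_k$, the orbits of $x$ and $y$ each visit $B_k$ with positive frequency but never at a common time. Since mean equicontinuity is perfectly compatible with $\overline{d}$ being bounded below off a thin neighborhood of the diagonal, no statement about $\mu\times\mu$-almost every pair can be the route to the conclusion; the obstacle you flagged is not a technical one to be engineered around, it is a sign the intermediate target is wrong.

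What the actual proof does instead is purely a separability-plus-regularity argument, requiring no control of $\mu\times\mu$. From the full-measure cover $\bigcup_n T^{-n}B_k$, disjointify to get for each $k$ a countable Borel partition (mod $\mu$) $\{C_{k,n}\}_{n\ge 0}$ of $X$ with $C_{k,n}\subset T^{-n}B_k$, so each cell has $\overline{d}$-diameter $<1/k$; in other words $(X,\overline{d})$ is $\mu$-essentially separable. Given $\eta\in(0,1)$, for each $k$ choose $N_k$ so that $\mu\bigl(\bigcup_{n\le N_k}C_{k,n}\bigr)>1-(1-\eta)2^{-k-1}$, and by regularity of $\mu$ choose pairwise disjoint compact sets $K_{k,n}\subset C_{k,n}$ ($n\le N_k$) still capturing measure $>1-(1-\eta)2^{-k}$. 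Finitely many pairwise disjoint compact sets are at pairwise $d$-distance $\ge\delta_k>0$, so for $x,y\in\bigcup_{n\le N_k}K_{k,n}$ with $d(x,y)<\delta_k$ the points must lie in one common cell $C_{k,n}$, whence $\overline{d}(x,y)<1/k$. Setting $X_\eta=\bigcap_k\bigcup_{n\le N_k}K_{k,n}$ gives $\mu(X_\eta)>\eta$, and for any $\eps>0$ one takes $k$ with $1/k<\eps$ and $\delta=\delta_k$. It is this compactness/disjointness step --- not largeness of $D_\eps$ in the product --- that produces the uniform modulus $\delta$; your ``Egorov/Lusin-type uniformization'' gestures toward the right tool but is anchored to an intermediate statement that is false.
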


Combining \cite[Theorem 2.7]{L16} and \cite[Remark 4.3]{L16}, we have the following characterization of $\mu$-mean sensitivity.
\begin{thm}  \label{thm:mu-mean-sensitive}
Let $(X,T)$ be a topological dynamical system and $\mu\in M_T^{\mathrm{erg}}(X)$.
If $(X,T)$ is $\mu$-mean sensitive, then  there exists a constant $\delta>0$ such that for $\mu\times\mu$-a.e.\ $(x,y)\in X\times X$ we have
\[
    \liminf_{n\to\infty} \frac{1}{n}\sum_{k=1}^{n}
    d(T^{k-1}x,T^{k-1}y)\geq \delta.
\]
\end{thm}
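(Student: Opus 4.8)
The plan is to reduce the statement to an almost-everywhere bound via the Birkhoff ergodic theorem and then upgrade that bound using a triangle inequality that holds at every point. First I would observe that since $\mu$ is $T$-invariant, the product measure $\mu\times\mu$ is invariant for $T\times T$, and the map $(x,y)\mapsto d(x,y)$ is continuous, hence lies in $L^1(\mu\times\mu)$. Applying the Birkhoff ergodic theorem to $(X\times X,T\times T,\mu\times\mu)$ and this function, the averages $\frac{1}{n}\sum_{k=1}^n d(T^{k-1}x,T^{k-1}y)$ converge for $\mu\times\mu$-a.e.\ $(x,y)$. In particular the $\liminf$ appearing in the conclusion coincides almost everywhere with
\[
    \overline{d}(x,y)=\limsup_{n\to\infty}\frac{1}{n}\sum_{k=1}^n d(T^{k-1}x,T^{k-1}y),
\]
so it suffices to produce $\delta>0$ with $\overline{d}\ge\delta$ for $\mu\times\mu$-a.e.\ $(x,y)$.

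Next I would record the structural properties of $\overline{d}$ that drive the argument. The function $\overline{d}$ is Borel measurable, being a $\limsup$ of the continuous functions $(x,y)\mapsto\frac{1}{n}\sum_{k=1}^n d(T^{k-1}x,T^{k-1}y)$; it is symmetric; and, the key point, it satisfies the triangle inequality $\overline{d}(x,z)\le\overline{d}(x,y)+\overline{d}(y,z)$ for \emph{all} $x,y,z\in X$. This holds everywhere rather than merely almost everywhere, because at each finite stage the averages obey the triangle inequality of $d$ termwise and $\limsup$ is subadditive.

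Let $\eps>0$ be the constant witnessing $\mu$-mean sensitivity and set $\delta=\eps/3$; I claim $\overline{d}\ge\delta$ almost everywhere. Suppose not. Then $(\mu\times\mu)(\{\overline{d}<\delta\})>0$ (using that $\overline{d}$ equals the a.e.\ limit), so by Fubini's theorem there is a point $x_0\in X$ for which $B=\{y\in X:\overline{d}(x_0,y)<\delta\}$ has $\mu(B)>0$. For every $y,z\in B$ the everywhere-valid triangle inequality gives $\overline{d}(y,z)\le\overline{d}(y,x_0)+\overline{d}(x_0,z)<2\delta<\eps$, so no pair in $B$ reaches the sensitivity threshold. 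This contradicts $\mu$-mean sensitivity applied to $B$, which must produce $y,z\in B$ with $\overline{d}(y,z)\ge\eps$. Hence $\overline{d}\ge\delta$ a.e., and by the first step the same lower bound holds for the $\liminf$, which proves the theorem.

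The step I expect to be delicate is the interface between the almost-everywhere output of the ergodic theorem and the genuinely pointwise (``there exist $x,y\in B$'') formulation of $\mu$-mean sensitivity: a mere a.e.\ bound on $\overline{d}$ over $B\times B$ would not contradict an existential statement, since the violating pair could sit in a null set. The device that makes the argument legitimate is to cut out $B$ as a single section $\{y:\overline{d}(x_0,y)<\delta\}$ through one fixed point $x_0$ and to exploit that the triangle inequality for $\overline{d}$ is valid at every point; this forces $\overline{d}(y,z)<\eps$ to hold for all, not merely almost all, pairs $y,z\in B$.
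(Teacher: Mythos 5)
Your proof is correct, but it takes a genuinely different route from the paper: the paper does not prove Theorem~\ref{thm:mu-mean-sensitive} at all, it imports it by combining Theorem~2.7 and Remark~4.3 of \cite{L16}. Your argument is self-contained and all its steps check out: the pointwise ergodic theorem applied to the (not necessarily ergodic) product system $(X\times X,T\times T,\mu\times\mu)$ with the continuous, hence integrable, function $d$ makes the averages converge a.e., so the $\liminf$ in the conclusion agrees a.e.\ with $\overline{d}(x,y)=\limsup_{n}\frac{1}{n}\sum_{k=1}^{n}d(T^{k-1}x,T^{k-1}y)$; the function $\overline{d}$ is Borel, symmetric, and satisfies the triangle inequality at \emph{every} point; and Fubini applied to the Borel set $\{\overline{d}<\eps/3\}$ yields a positive-measure section $B=\{y\colon \overline{d}(x_0,y)<\eps/3\}$ on which every pair satisfies $\overline{d}(y,z)<2\eps/3<\eps$, contradicting the existential clause in the definition of $\mu$-mean sensitivity. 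You also correctly identified and resolved the one delicate point, namely that an a.e.\ bound cannot by itself contradict an existential statement; the single-section trick through $x_0$ plus the everywhere-valid triangle inequality is exactly the right device. Comparing the two routes: your proof is elementary (Birkhoff, Fubini, triangle inequality), produces the explicit constant $\delta=\eps/3$, and nowhere uses ergodicity of $\mu$, so it in fact establishes the statement for arbitrary $\mu\in M_T(X)$, which is more general than what the paper asserts; the paper's citation-based route keeps the note short and links the statement to the finer characterizations of $\mu$-mean sensitivity via finite partitions developed in \cite{L16}, which give more than this single implication.
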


\begin{defn}
Let $(X,T)$ be a topological dynamical system and $\mu\in M_T(X)$.
We say that $(X,T)$ is \emph{$\mu$-logarithmically mean equicontinuous} if for any $\eta\in (0,1)$ there exists a Borel subset $X_\eta$ of $X$ with $\mu(X_\eta)>\eta$ such that for any $\eps>0$ there exists  $\delta>0$ such that for any $x,y\in X_\eta$ with $d(x,y)<\delta$, one has 
\[
    \limsup_{n\to\infty} \frac{1}{H_n}\sum_{k=1}^{n}\frac{1}{k}d(T^{k-1}x,T^{k-1}y)<\eps.
\]
We say that $(X,T)$ is \emph{$\mu$-logarithmically mean sensitive} if there exists a constant $\eps>0$ such that for any Borel subset $B$ of $X$ with $\mu(B)>0$ there exist $x,y\in B$ satisfying 
\[
    \limsup_{n\to\infty} \frac{1}{H_n}\sum_{k=1}^{n}\frac{1}{k}d(T^{k-1}x,T^{k-1}y)\geq \eps.
\]
\end{defn}

\begin{prop}\label{prop:mu-mean-sen}
Let $(X,T)$ be a topological dynamical system and $\mu\in M_T^{\mathrm{erg}}(X)$.
Then $(X,T)$ is $\mu$-mean sensitive if and only if
it is $\mu$-logarithmically mean sensitive.
\end{prop}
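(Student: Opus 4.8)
The plan is to prove the two implications separately. One direction follows immediately from the comparison between harmonic and Ces\`aro averages provided by Lemma~\ref{lem:log-mean-seq}, while the converse relies essentially on the ergodicity of $\mu$ through Theorem~\ref{thm:mu-mean-sensitive}.

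First I would treat the implication that $\mu$-logarithmic mean sensitivity implies $\mu$-mean sensitivity, which is the easy one. Suppose $(X,T)$ is $\mu$-logarithmically mean sensitive with constant $\eps>0$. Given any Borel set $B$ with $\mu(B)>0$, the definition yields a pair $x,y\in B$ for which $\limsup_{n\to\infty}\frac{1}{H_n}\sum_{k=1}^n\frac{1}{k}d(T^{k-1}x,T^{k-1}y)\geq\eps$. Applying the rightmost inequality in Lemma~\ref{lem:log-mean-seq} to the bounded sequence $x_k=d(T^{k-1}x,T^{k-1}y)$, the harmonic $\limsup$ is bounded above by the Ces\`aro $\limsup$, so $\limsup_{n\to\infty}\frac{1}{n}\sum_{k=1}^n d(T^{k-1}x,T^{k-1}y)\geq\eps$ for the same pair. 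Thus $(X,T)$ is $\mu$-mean sensitive with the same constant.

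For the converse, assume $(X,T)$ is $\mu$-mean sensitive. The naive attempt to run the previous argument in reverse fails, because Lemma~\ref{lem:log-mean-seq} controls the harmonic $\limsup$ only from above by the Ces\`aro $\limsup$, which is the wrong direction for transferring a lower bound. This is exactly the point where ergodicity must intervene. By Theorem~\ref{thm:mu-mean-sensitive}, there is a constant $\delta>0$ such that for $\mu\times\mu$-almost every $(x,y)$ one has $\liminf_{n\to\infty}\frac{1}{n}\sum_{k=1}^n d(T^{k-1}x,T^{k-1}y)\geq\delta$. Applying the leftmost inequality in Lemma~\ref{lem:log-mean-seq} to the same bounded sequence gives $\liminf_{n\to\infty}\frac{1}{H_n}\sum_{k=1}^n\frac{1}{k}d(T^{k-1}x,T^{k-1}y)\geq\delta$, and therefore the harmonic $\limsup$ is also at least $\delta$ for $\mu\times\mu$-almost every $(x,y)$.

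It then remains to localize this almost-everywhere statement to an arbitrary positive-measure set. Given any Borel set $B$ with $\mu(B)>0$, the product $B\times B$ has $\mu\times\mu$-measure $\mu(B)^2>0$, so it meets the full-measure set on which the harmonic $\limsup$ exceeds $\delta$; choosing a pair $(x,y)$ in this intersection produces $x,y\in B$ with $\limsup_{n\to\infty}\frac{1}{H_n}\sum_{k=1}^n\frac{1}{k}d(T^{k-1}x,T^{k-1}y)\geq\delta$. Taking $\eps=\delta$ shows that $(X,T)$ is $\mu$-logarithmically mean sensitive. The main obstacle, and the only genuinely nontrivial input, is Theorem~\ref{thm:mu-mean-sensitive}: it upgrades the $\limsup$-based notion of mean sensitivity to a $\liminf$ lower bound valid almost everywhere, and this is precisely the form that the comparison lemma can propagate to the logarithmic averages.
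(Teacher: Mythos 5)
Your proof is correct and follows essentially the same route as the paper: the easy direction via the $\limsup$ comparison in Lemma~\ref{lem:log-mean-seq}, and the converse via Theorem~\ref{thm:mu-mean-sensitive} combined with the $\liminf$ comparison, then localizing to $B\times B$. The only difference is that you spell out the localization step explicitly, which the paper's proof of this proposition leaves implicit (it appears explicitly only later, in the proof of Theorem~\ref{thm:mu-log-mean-eq}).
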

\begin{proof}
($\Leftarrow$)
By Lemma~\ref{lem:log-mean-seq}, 
for any $x,y\in X$ we have
\[ 
\limsup_{n\to\infty} \frac{1}{H_n}\sum_{k=1}^{n}\frac{1}{k}d(T^{k-1}x,T^{k-1}y)\leq \limsup_{n\to\infty} \frac{1}{n}\sum_{k=1}^{n}
    d(T^{k-1}x,T^{k-1}y).\]
Then it is clear that $(X,T)$ is $\mu$-mean sensitive provided that
it is $\mu$-logarithmically mean sensitive.

($\Rightarrow$)
Assume that $(X,T)$ is $\mu$-mean sensitive.
By Theorem~\ref{thm:mu-mean-sensitive}, 
there exists a constant $\eps>0$ such that for $\mu\times\mu$-a.e.\ $(x,y)\in X\times X$ it holds
\[
    \liminf_{n\to\infty} \frac{1}{n}\sum_{k=1}^{n}
    d(T^{k-1}x,T^{k-1}y)\geq \eps.
\]
Using Lemma~\ref{lem:log-mean-seq} again, 
we see that for any $x,y\in X$ it holds
\[ 
\liminf_{n\to\infty} \frac{1}{n}\sum_{k=1}^{n}
    d(T^{k-1}x,T^{k-1}y)\leq  \liminf_{n\to\infty} \frac{1}{H_n}\sum_{k=1}^{n}\frac{1}{k}d(T^{k-1}x,T^{k-1}y).\]
It follows that for $\mu\times\mu$-a.e.\ $(x,y)\in X\times X$ we have
\[
    \limsup_{n\to\infty} \frac{1}{H_n}\sum_{k=1}^{n}\frac{1}{k}d(T^{k-1}x,T^{k-1}y) \geq \liminf_{n\to\infty} \frac{1}{H_n}\sum_{k=1}^{n}\frac{1}{k}d(T^{k-1}x,T^{k-1}y)\geq \eps.
\]
Hence, $(X,T)$ is $\mu$-logarithmically mean sensitive.
\end{proof}

\begin{thm}\label{thm:mu-log-mean-eq}
Let $(X,T)$ be a topological dynamical system and $\mu\in M_T^{\mathrm{erg}}(X)$.
Then $(X,T)$ is $\mu$-mean equicontinuous if and only if
it is $\mu$-logarithmically mean equicontinuous.
\end{thm}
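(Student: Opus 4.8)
The plan is to reduce the statement to two facts already available: the Cesàro dichotomy for ergodic measures (Theorem~\ref{thm:dict-mu-mean-eq}) and the sensitivity equivalence (Proposition~\ref{prop:mu-mean-sen}), glued together by the comparison of averages in Lemma~\ref{lem:log-mean-seq}. The forward implication is the easy half and is the measure-theoretic analogue of Remark~\ref{rem:log-mean-eq}: assuming $\mu$-mean equicontinuity, for each $\eta\in(0,1)$ I would keep the very same set $X_\eta$, and for a given $\eps>0$ the very same $\delta>0$; then for $x,y\in X_\eta$ with $d(x,y)<\delta$, Lemma~\ref{lem:log-mean-seq} yields
\[
\limsup_{n\to\infty}\frac{1}{H_n}\sum_{k=1}^n\frac{1}{k} d(T^{k-1}x,T^{k-1}y)\le \limsup_{n\to\infty}\frac{1}{n}\sum_{k=1}^n d(T^{k-1}x,T^{k-1}y)<\eps,
\]
so the same data witnesses $\mu$-logarithmic mean equicontinuity.

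The substance lies in the converse. Assuming $(X,T)$ is $\mu$-logarithmically mean equicontinuous, I would invoke the Cesàro dichotomy (Theorem~\ref{thm:dict-mu-mean-eq}): the system is either $\mu$-mean equicontinuous, in which case we are finished, or $\mu$-mean sensitive. I would rule out the second alternative by contradiction. If $(X,T)$ were $\mu$-mean sensitive, then by Proposition~\ref{prop:mu-mean-sen} it would be $\mu$-logarithmically mean sensitive, and the point is that $\mu$-logarithmic mean sensitivity is incompatible with $\mu$-logarithmic mean equicontinuity. Establishing this incompatibility is the only real step and is where I expect the (mild) difficulty to concentrate.

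To prove the incompatibility, let $\eps_0>0$ be the logarithmic sensitivity constant. I would fix some $\eta\in(0,1)$ and take the corresponding $X_\eta$ with $\mu(X_\eta)>\eta>0$; applying logarithmic mean equicontinuity with $\eps=\eps_0$ produces $\delta>0$ such that every pair $x,y\in X_\eta$ with $d(x,y)<\delta$ has logarithmic upper average strictly below $\eps_0$. Compactness of $X$ now enters: cover $X$ by finitely many open balls of radius $\delta/2$; since $\mu(X_\eta)>0$, finite subadditivity forces one ball $B_0$ to meet $X_\eta$ in positive measure, so $B=X_\eta\cap B_0$ satisfies $\mu(B)>0$ and $\operatorname{diam}(B)<\delta$. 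Every pair in $B$ then has logarithmic upper average $<\eps_0$, directly contradicting $\mu$-logarithmic mean sensitivity applied to $B$. This contradiction eliminates the sensitive alternative and leaves $\mu$-mean equicontinuity, completing the proof.

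I expect the covering-by-small-balls argument to be the main obstacle only in the sense that it is the one genuinely new ingredient; everything else is bookkeeping with quantifiers and a single appeal to Lemma~\ref{lem:log-mean-seq}. A cleaner, equivalent packaging would be to first record the incompatibility as a standalone observation (that $\mu$-logarithmic mean equicontinuity and $\mu$-logarithmic mean sensitivity cannot coexist), proved exactly by the ball argument above, and then combine it mechanically with Theorem~\ref{thm:dict-mu-mean-eq} and Proposition~\ref{prop:mu-mean-sen}; I would likely present it inline to keep the proof short.
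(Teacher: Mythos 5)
Your proposal is correct and follows essentially the same route as the paper: both halves rest on Lemma~\ref{lem:log-mean-seq} for the easy direction and on the dichotomy of Theorem~\ref{thm:dict-mu-mean-eq} together with Proposition~\ref{prop:mu-mean-sen} for the converse, phrased as a contradiction rather than the paper's contrapositive. In fact your explicit ball-covering argument spells out the step (that $\mu$-logarithmic mean sensitivity is incompatible with $\mu$-logarithmic mean equicontinuity) which the paper's proof leaves implicit in its final sentence, so your write-up is, if anything, slightly more complete.
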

\begin{proof}
($\Rightarrow$)
By Lemma~\ref{lem:log-mean-seq}, it is clear that if $(X,T)$ is $\mu$-mean equicontinuous, then it is also $\mu$-logarithmically mean equicontinuous. 

($\Leftarrow$) 
If $(X,T)$ is not $\mu$-mean equicontinuous, then using Theorem~\ref{thm:dict-mu-mean-eq}  we get that $(X,T)$ is $\mu$-mean sensitive.
By the proof of Proposition~\ref{prop:mu-mean-sen},
there exists a constant $\eps>0$ such that for $\mu\times\mu$-a.e.\ pair $(x,y)\in X\times X$ we have
\begin{equation}\label{ineq:bad_pair}
    \limsup_{n\to\infty} \frac{1}{H_n}\sum_{k=1}^{n}\frac{1}{k}d(T^{k-1}x,T^{k-1}y) \geq \eps.
\end{equation}
 
Fix a Borel subset $B$ of $X$ with $\mu(B)>0$. Note that $\mu\times\mu(B\times B)>0$. Then for some $(x,y)\in B\times B$ with the same $\eps>0$ it holds
\[
\limsup_{n\to\infty} \frac{1}{H_n}\sum_{k=1}^{n}\frac{1}{k}d(T^{k-1}x,T^{k-1}y) \geq \eps.
\]
Hence, $(X,T)$ is not $\mu$-logarithmically mean equicontinuous.
\end{proof}

\begin{rem}
It is interesting to know whether Theorem~\ref{thm:mu-log-mean-eq} holds for an arbitrary (that is, not necessarily ergodic) invariant measure.
\end{rem}

\medskip

\noindent\textbf{Acknowledgements}: 
D.~Kwietniak was partially supported by the National Science Centre, Poland  under the Weave-UNISONO call in the Weave programme [grant no.~2021/03/Y/ST1/00072]. J. Li was partially supported by National Key R\&D Program of China (2024YFA1013601) and NSF of China (12222110, 12171298). H.~Pourmand was partially supported by the National Science Centre, Poland  under the grant no.~2022/47/P/ST1/00854.
The authors would like to thank the referee for the careful
reading and suggestions that helped to improved the manuscript.

\bibliographystyle{amsplain}

\begin{thebibliography}{99}

\bibitem{DG16} Downarowicz, Tomasz; Glasner, Eli. \emph{Isomorphic extensions and applications}. Topol. Methods Nonlinear Anal. 48 (2016), no. 1, 321--338. 

\bibitem{D02} Dudley, R. M. \emph{Real analysis and probability}. Revised reprint of the 1989 original. Cambridge Studies in Advanced Mathematics, 74. Cambridge University Press, Cambridge, 2002.

\bibitem{EW11} Einsiedler, Manfred; Ward, Thomas. \emph{Ergodic theory with a view towards number theory}. Graduate Texts in Mathematics, 259. Springer-Verlag London, Ltd., London, 2011.

\bibitem{FKL18} Ferenczi, Sébastien; Kułaga-Przymus, Joanna; Lemańczyk, Mariusz. \emph{Sarnak's conjecture: what's new}. Ergodic theory and dynamical systems in their interactions with arithmetics and combinatorics, 163--235, Lecture Notes in Math., 2213, Springer, Cham, 2018.

\bibitem{F51} Fomin, S. \emph{On dynamical systems with a purely point spectrum}. (Russian) Doklady Akad. Nauk SSSR (N.S.) 77 (1951), 29--32. 

\bibitem{FH18} Frantzikinakis, Nikos; Host, Bernard. \emph{The logarithmic Sarnak conjecture for ergodic weights}. Ann. of Math. (2) 187 (2018), no. 3, 869--931.

\bibitem{G17} 
García-Ramos, Felipe. \emph{Weak forms of topological and measure-theoretical equicontinuity: relationships with discrete spectrum and sequence entropy}. Ergodic Theory Dynam. Systems 37 (2017), no. 4, 1211--1237.

\bibitem{GKL18} 
Gomilko, Alexander; Kwietniak, Dominik; Lemańczyk, Mariusz. \emph{Sarnak's conjecture implies the Chowla conjecture along a subsequence}. Ergodic theory and dynamical systems in their interactions with arithmetics and combinatorics, 237--247, Lecture Notes in Math., 2213, Springer, Cham, 2018.

\bibitem{HLTXY21} 
Huang, Wen; Li, Jian; Thouvenot, Jean-Paul; Xu, Leiye; Ye, Xiangdong. \emph{Bounded complexity, mean equicontinuity and discrete spectrum}. Ergodic Theory Dynam. Systems 41 (2021), no. 2, 494--533. 

\bibitem{HLY11} 
Huang, Wen; Lu, Ping; Ye, Xiangdong. \emph{Measure-theoretical sensitivity and equicontinuity}. Israel J. Math. 183 (2011), 233--283. 

\bibitem{L16} Li, Jian. \emph{Measure-theoretic sensitivity via finite partitions}. Nonlinearity 29 (2016), no. 7, 2133--2144.

\bibitem{LTY15} 
Li, Jian; Tu, Siming; Ye, Xiangdong. \emph{Mean equicontinuity and mean sensitivity}. Ergodic Theory Dynam. Systems 35 (2015), no. 8, 2587--2612.

\bibitem{O52} 
Oxtoby, John C. \emph{Ergodic sets}. Bull. Amer. Math. Soc. 58 (1952), 116--136.

\bibitem{P24}
Pourmand, Habibeh. \emph{Dynamical pseudo-metrics and their
applications: (logarithmic) generic
points, equicontinuity, and ergodicity}. 
Thesis (Ph.D.)–Jagiellonian University in Kraków. 2024. \url{https://ruj.uj.edu.pl/handle/item/324951}

\bibitem{S12} 
Sarnak, Peter. \emph{Mobius randomness and dynamics}. Not. S. Afr. Math. Soc. 43 (2012), no. 2, 89--97.

\bibitem{S82}
Scarpellini, Bruno. \emph{Stability properties of flows with pure point spectrum}. J. London Math. Soc. (2) 26 (1982), no. 3, 451--464. 

\bibitem{T17} Tao, Terence. \emph{Equivalence of the logarithmically averaged Chowla and Sarnak conjectures}. Number theory—Diophantine problems, uniform distribution and applications, 391--421, Springer, Cham, 2017.

\bibitem{QZ20} Qiu, Jiahao; Zhao, Jianjie. \emph{A note on mean equicontinuity}. J. Dynam. Differential Equations 32 (2020), no. 1, 101--116. 

\end{thebibliography}

\end{document}